\theoremstyle{remarkstyle}
\newtheorem{example}[therm]{Examples}
\newcommand{\alg}[1]{\operatorname{#1 -\mathbf{Alg}}}
\def\eval{\mathrm{ev}}
\def\lang{\mathcal{L}}
\newcommand{\bemph}[1]{\textbf{\emph{#1}}}
\newcommand{\theory}[1]{\mathcal{#1}}
\newcommand{\ltil}[1]{\widetilde{#1}}
\newcommand{\ovln}[1]{\overline{#1}}
\newcommand{\posdoctrine}[2]{\xymatrix{#2 \colon #1^{\op}  \ar[r] & \pos }}
\newcommand{\doctrine}[2]{\xymatrix{#2 \colon #1^{\op}  \ar[r] & \infsl }}
\def\mT{\mathrm{T}}
\newcommand{\compun}[1]{{#1}^{\universal}}
\newcommand{\compex}[1]{{#1}^{\existential}}
\newcommand{\angbr}[2]{\langle #1,#2 \rangle}
\newcommand{\freccia}[3]{\xymatrix{#2 \colon #1  \ar[r] &  #3}}
\newcommand{\cover}[3]{\xymatrix{#2 \colon #1  \ar@{-|>}[r] &  #3}}
\newcommand{\frecciasopra}[3]{\xymatrix{ #1  \ar[r]^{#2} &  #3}}
\newcommand{\duefreccia}[3]{\xymatrix@C=0.5cm{#2 \colon #1  \ar@{=>}[r] &  #3}}
\newcommand{\modificazione}[3]{\xymatrix@C=0.5cm{#2 \colon #1  \ar@{~>}[r] &  #3}}
\newcommand{\duemorfismo}[6]{\xymatrix{
#1^{\op} \ar[rrd]^#2_{}="a" \ar[dd]_{#3^{\op}}\\
&& \pos\\
#5^{\op}  \ar[rru]_#6^{}="b"
\ar_{#4}  "a";"b"}}
\newcommand{\comsquare}[8]{ \xymatrix@+1pc{ 
#1 \ar[r]^{#5} \ar[d]_{#6} & #2 \ar[d]^{#7} \\
#3 \ar[r]_{#8} & #4 
}}
\newcommand{\pullback}[8]{ \xymatrix@+2pc{ 
#1 \pullbackcorner \ar[r]^{#5} \ar[d]_{#6} & #2 \ar[d]^{#7} \\
#3 \ar[r]_{#8} & #4 
}}
\newcommand{\quadratocomm}[8]{ \xymatrix@+1pc{ 
#1 \ar[r]^{#5} \ar[d]_{#6} & #2 \ar[d]^{#7} \\
#3 \ar[r]_{#8} & #4 
}}
\newcommand{\comsquarelargo}[8]{ \xymatrix@+1pc{ 
#1 \ar[rr]^{#5} \ar[d]_{#6} && #2 \ar[d]^{#7} \\
#3 \ar[rr]_{#8} && #4 
}}
\newcommand{\parallelmorphisms}[4]{\xymatrix@+1pc{
#1 \ar @<+4pt>[r]^{#2} \ar @<-4pt>[r]_{#3} & #4
}}
\newcommand{\relation}[4]{\xymatrix@+1pc{
\angbr{#2}{#3}\colon #1 \ar @<+4pt>[r] \ar @<-4pt>[r] & #4
}}
\newcommand{\frecceparalleleopposte}[4]{\xymatrix@+1pc{
#1 \ar@<+4pt>[r]^{#2} \ar@<-4pt>@{<-}[r]_{#3} & #4
}}
\newcommand{\equalizer}[6]{\xymatrix@+1pc{
#1 \ar[r]^{#2} & #3 \ar @<+4pt>[r]^{#4} \ar @<-4pt>[r]_{#5} & #6
}}
\newcommand{\coequalizer}[6]{\xymatrix@+1pc{
 #1 \ar @<+4pt>[r]^{#2} \ar @<-4pt>[r]_{#3} & #4 \ar[r]^{#5} & #6
}}
\newcommand{\sottoggetto}[2]{\xymatrix{
#1 \ar@{>->}[r] & #2
}}
\newcommand{\subobject}[3]{\xymatrix{
#1 \ar@{>->}[r]^{#2} & #3
}}
\newcommand{\pullbackcorner}[1][ul]{\save*!/#1+1.2pc/#1:(1,-1)@^{|-}\restore}
\def\ED{\operatorname{\mathbf{ExD}}}            
\def\PD{\operatorname{\mathbf{SD}}}             
\def\SD{\operatorname{\mathbf{SD}}}    
\newcommand{\exponent}[1]{#1_{exp}} 
\def\UD{\operatorname{\mathbf{UnD}}}  
\def\mC{\mathcal{C}}
\def\mD{\mathcal{D}}
\def\x{\times}
\def\Ainv{\hbox{\rotatebox[origin=c]{180}{A}}}
\newbox\erove \setbox\erove=\hbox{\reflectbox{E}}
\def\Einv{\usebox\erove}
\def\pr{\operatorname{ pr}}
\def\id{\operatorname{ id}}
\def\op{\operatorname{ op}}
\def\universal{\operatorname{un}}
\def\existential{\operatorname{ex}}
\def\Sub{\operatorname{ Sub}}
\def\Set{\operatorname{\mathbf{Set}}}
\def\:{\colon}
\def\infsl{\operatorname{\mathbf{InfSL}}}
\def\pos{\operatorname{\mathbf{Pos}}}
\def\ev{\operatorname{ev}}
\newcommand{\coprodotto}[2]{{\scriptsize \begin{pmatrix}
#1 \\
#2
\end{pmatrix}}}
\newcommand{\dial}[1]{\mathbf{Dial}(#1)}
\def\lat{\operatorname{\mathbf{Lat}}}
\def\distlat{\operatorname{\mathbf{DistLat}}}
\newcommand{\latdoctrine}[2]{\xymatrix{#2 \colon #1^{\op}  \ar[r] & \lat }}
\newcommand{\distlatdoctrine}[2]{\xymatrix{#2 \colon #1^{\op}  \ar[r] & \distlat }}
\begin{document}

\lefttitle{LaTeX\ Supplement}
\righttitle{Mathematical Structures in Computer Science}

\papertitle{Article}

\jnlPage{1}{00}
\jnlDoiYr{2020}
\doival{10.1017/xxxxx}

\title{Quantifier completions, choice principles and applications}

\begin{authgrp}
\author{Davide Trotta}
\affiliation{University of Verona, Verona VR, Italy,\\
        \email{trottadavide92@gmail.com}}

\author{ Matteo Spadetto}
\affiliation{University of Leeds, Leeds, UK\\
        \email{matteo.spadetto.42@gmail.com}}
\end{authgrp}
\history{(Received xx xxx xxx; revised xx xxx xxx; accepted xx xxx xxx)}

\begin{abstract}
We contribute to the knowledge of the quantifier completions and their applications by using the language of doctrines. This algebraic presentation allows us to properly analyse the behaviour of the existential and universal quantifiers. We wish to convey the following points: the first is that these completions preserve the lattice structure and the distributive lattice structure of the fibres under opportune hypotheses which turn out to be preserved as well; the second regards the applications, in particular to the dialectica construction; the third is that these free constructions carry on some relevant choice principles.
\end{abstract}

\begin{keywords}
doctrines;  completions;  choice principles;  dialectica categories.
\end{keywords}

\maketitle

\section{Introduction}

In recent years relevant logical completions involving quantifiers have been extensively studied in several areas of categorical logic. For instance,  in \cite{Hofstra2010} the author shows applications of these completions in the framework of dialectica construction, while in \cite{CPERL} are provided applications in homotopy theory. 

In \cite{ECRT} the first author introduces the notion of \emph{existential completion} of a primary doctrine, which is a free construction providing an existential doctrine starting from a primary one. This construction preserves the fragment of Regular Logic, i.e. if $\doctrine{\mC}{P}$ is an elementary and existential doctrine, then its existential completion $\doctrine{\mC}{\compex{P}}$ is again an elementary and existential doctrine. 

Originally, one of the main applications of this construction was to extend the notion of exact completion to an arbitrary elementary doctrine, but in the recent work \cite{MDM}, a strong connection between the existential completion and some choice principles is provided, e.g. doctrines which arise as instances of existential completion satisfy the Rule of Choice and every formula admits a \emph{prenex normal form} presentation. The preservation of the Regular structure, together with the validity of strong constructive principles, suggests that this construction can be applied to extend regular theories to theories satisfying the Rule of Choice, or to characterise those fragments of a given theory which satisfy the Rule of Choice, see \cite{MDM}.

Motivated by these interesting principles which hold in every doctrine arising as existential completion, we study which other logical structures are preserved by this free completion, with a focus on the Coherent fragment of logic.
First we show under which hypotheses the existential completion preserves the lattice structure of the fibres of a given doctrine $\latdoctrine{\mC}{P}$, where $\lat$ is the category of lattices. Here the crucial assumption is the existence of left-adjoints for the family of reindexings along coproduct injections. Moreover, requiring the objects of $\mC$ to be inhabited, we get the existential completion of $P$ to satisfy this property as well.

Then we show that the distributivity of meets over joins in the fibres is preserved by the existential completion as well, and this preservation depends on the assumption that the family of left-adjoints to reindexings along coproduct injections satisfies the Frobenius Reciprocity. These results allow to apply the existential completion to extend, for example, theories whose predicative part is written in the fragment of Coherent Logic or, more generally, in the fragment of Geometric Logic.

A by-product of our previous analysis regards the notion of \emph{universal completion}, which can be presented as the dual counterpart of the existential completion. In detail, observing that the universal completion $\compun{P}$ of a doctrine is nothing but the doctrine $ (-)^{\op}\compex{((-)^{\op}P)}$, where  $\freccia{\pos}{(-)^{\op}}{\pos}$ denotes the functor which inverts the order of a poset, it is the case that all our results regarding the preservation properties of the existential completion dualise in the setting of the universal completion. For instance, we automatically get when it is the case the universal completion preserves the lattice structure of the fibres and thier distributivity. Moreover, since the existential completion satisfies the Rule of Choice, we show that the universal completion of a doctrine satisfies a property which we call \emph{Counterexample Property}, which corresponds to saying that if $\forall x \psi(x)\vdash\bot$ then there exists a term $t$ such that $\psi(t)\vdash \bot$. This term $t$ represents the \emph{counterexample}. For sake of readability and thanks to the notable symmetry between these completions, we present any given result regarding the universal completion right after the corresponding one for the existential completion.

Finally we study how these constructions interact and their applications. Whenever the base category of a given universal doctrine is also cartesian closed, it is the case that its existential completion preserves the universal structure, i.e. is universal as well. In other words by sequentially applying the universal and the existential completions to a given doctrine $\latdoctrine{\mC}{P}$, we get a doctrine $\latdoctrine{\mC}{\compex{(\compun{P})}}$ which is both existential and universal. Moreover it validates a form of choice principle called AC by Troelstra in \cite{KGCW2}: $$ \forall x \exists y \alpha (x,y)\rightarrow \exists f \forall x \alpha (x, fx).$$

An relevant application of our study about preservations regards the dialectica construction, see \cite{DEPAIVA1991,HYLAND2002}. As pointed out in \cite{Hofstra2010} in the framework of fibration theory, the interaction of constructions which freely add quantifiers plays a crucial rule in the abstract presentation of the dialectica interpretation \cite{Goedel58}, in particular in the construction of the \emph{dialectica monad}. In \cite{Hofstra2010} the author shows that this monad can be decomposed as first a simple product completion, followed by simple co-product completion. The original presentation of the dialectica construction has always made it difficult to analyse the properties of the dialectica categories. This recent alternative characterisation in terms of quantifier completions, suggests that several results of preservation of these constructions might have a crucial role in spotting categorical features of diaelctica categories.

For instance, our results yield a proof that the poset reflection of the dialectica category $\dial{P}$ associated to a doctrine $\latdoctrine{\mC}{P}$ is a lattice, as well as a sufficient condition over a given doctrine that makes the poset reflection of the corresponding dialectica category into a distributive lattice. Such results give grounds for hope in further applications of this deep relation between the dialectica construction and the quantifier additions.

\medskip

\noindent
\bemph{Synopsis.} In Section 2 we recall definitions about universal and existential doctrines, and we fix the notation. In Section 3 we recall from \cite{ECRT} the existential completion, we introduce the universal one, and we prove a form a duality between the two notions. 
In Section 4 we display a set of hypotheses under which the existential and universal completions preserve the lattice structure of the fibres of a given doctrine. In Section 5 we show that the same hypotheses, together with the additional request regarding the Frobenius Reciprocity, also allow the preservation of the distributivity in the fibres. In Section 6 we present the interaction between the two constructions and the applications. In Section 7 we show which choice principles arise in doctrines which are instances of free quantifier constructions. Conclusions and future works are treated in Section 8.

\section{Brief recap on doctrines}
The notion of hyperdoctrine was introduced by F.W. Lawvere in a series
of seminal papers \cite{AF,EHCSAF}, and in the following years this notion has been revisited and generalised in several ways, see \cite{QCFF,EQC,UEC}. 

In this work we start by considering a more general notion, which we call \emph{pos-doctrine}. We specialise this notion during the next sections.
\begin{definition}
A \bemph{pos-doctrine} is a functor $\posdoctrine{\mC}{P}$, where $\mC$ is a category with finite products, and $\pos$ is the category of posets.
\end{definition}
We think of the base category $\mC$ of a pos-doctrine as the category of \emph{types} or \emph{contexts} and, for every object $A\in \mC$, we have a poset $P(A)=(P(A),\vdash)$ of \emph{predicates} on $A$.

\begin{example}\label{example subobject doct}
Let $\mC$ be a category with finite limits. The functor: $$\posdoctrine{\mC}{{\Sub_{\mC}}}$$
assigning to an object $A$ in $\mC$ the poset $\Sub_{\mC}(A)$ of subobjects of $A$ and such that for an arrow $\frecciasopra{B}{f}{A}$ the morphism $\freccia{\Sub_{\mC}(A)}{\Sub_{\mC}(f)}{\Sub_{\mC}(B)}$ is given by pulling a subobject back along $f$, is a pos-doctrine.
\end{example}
\begin{example}\label{example set-theor hyperdoctrine}
Consider the set-theoretic doctrine $\posdoctrine{\Set}{S}$. In this case $\Set$ is the category of sets and functions and, for every set $A$, $S(A)$ is the poset category of subsets of the set $A$ together with the inclusions. A functor $\freccia{S(B)}{S_f}{S(A)}$ acts as the inverse image $f^{-1}U$ on a subset $U$ of $B$.
\end{example}

\begin{example}\label{example LT}
Let $\theory{T}$ be a theory in a first order language $\lang$. We define a pos-doctrine:
$$\posdoctrine{\mC_{\theory{T}}}{LT}$$
where $\mC_{\theory{T}}$ is the category of contexts, i.e. of lists of variables and term substitutions:
\begin{itemize}
\item \bemph{objects} of $\mC_{\theory{T}}$ are finite lists of variables $\vec{x}:=(x_1,\dots,x_n)$, and we include the empty list $()$;
\item a \bemph{morphism} from $(x_1,\dots,x_n)$ into $(y_1,\dots,y_m)$ is a substitution: $$[t_1/y_1,\dots, t_m/y_m]$$ where the terms $t_i$ are built in $\lang$ on the variables $x_1,\dots, x_n$;
\item the \bemph{composition} of two morphisms $\freccia{\vec{x}}{[\vec{t}/\vec{y}]}{\vec{y}}$ and $\freccia{\vec{y}}{[\vec{s}/\vec{z}]}{\vec{z}}$ is given by substitution.
\end{itemize}
The functor $\doctrine{\mC_{\theory{T}}}{LT}$ sends a list $(x_1,\dots,x_n)$ to the set: $$LT(x_1,\dots,x_n)$$  of all well-formed formulas in the context $(x_1,\dots,x_n)$. We say that $\psi\leq \phi$, where $\phi,\psi\in LT(x_1,\dots,x_n)$, if $\psi\vdash_{\theory{T}}\phi$, and we consider its poset reflection. Given a morphism:
$$\freccia{(x_1,\dots,x_n)}{[t_1/y_1,\dots,t_m/y_m]}{(y_1,\dots,y_m)}$$ of $\mC_{\theory{T}}$, the functor $LT_{[\vec{t}/\vec{y}]}$ acts as the substitution
$LT_{[\vec{t}/\vec{y}]}(\psi(y_1,\dots,y_m))=\psi[\vec{t}/\vec{y}]$.
\end{example}

\begin{definition}\label{def existential doctrine}
A pos-doctrine $\posdoctrine{\mC}{P}$ is \bemph{existential} if, for every $A_1$ and $A_2$ in $\mC$, for any projection $\freccia{A_1\times A_2}{{\pr_i}}{A_i}$, $i=1,2$, the functor:
$$ \freccia{P(A_i)}{{P_{\pr_i}}}{P(A_1\times A_2)}$$
has a left adjoint $\Einv_{\pr_i}$, and these satisfy the \bemph{Beck-Chevalley condition:} for any pullback diagram:
$$
\quadratocomm{X'}{A'}{X}{A}{{\pr'}}{f'}{f}{{\pr}}
$$
with $\pr$ and $\pr'$ projections, for any $\beta$ in $P(X)$ the canonical arrow: 
$$ \Einv_{\pr'}P_{f'}(\beta)\leq P_f \Einv_{\pr}(\beta)$$
is an isomorphism.
\end{definition}

\begin{example}
The pos-doctrine $\posdoctrine{\mC}{\Sub}$ presented in Example \ref{example subobject doct} is existential if and only if the base category $\mC$ is regular. See \cite{QCFF}.
\end{example}
\begin{definition}\label{def universal doctrine}
A pos-doctrine $\posdoctrine{\mC}{P}$ is \bemph{universal} if, for every $A_1$ and $A_2$ in $\mC$, for any projection $\freccia{A_1\times A_2}{{\pr_i}}{A_i}$, $i=1,2$, the functor
$$ \freccia{P(A_i)}{{P_{\pr_i}}}{P(A_1\times A_2)}$$
has a right adjoint $\Ainv_{\pr_i}$, and these satisfy the Beck-Chevalley condition.
\end{definition}
\begin{example}
The pos-doctrine $\posdoctrine{\mC_{\theory{T}}}{LT}$ as defined in Example \ref{example LT} for a first order theory $\theory{T}$, is universal and existential. The right adjoints are computed by quantifying universally the variables that are not involved
in the substitution given by the projection, and similarly left adjoints are computed by quantifying existentially the variables which are not involved in the substitution given by the projection.
\end{example}

\begin{example}
The pos-doctrine $\posdoctrine{\Set}{S}$ presented in Example \ref{example set-theor hyperdoctrine} is existential and universal: on a subset $U$ of $A\x B$, for a projection $\freccia{A\x B}{\pr_A}{A}$, the right  adjoint $\Ainv_{\pr_A}$ is given by the assignment $\Ainv_{\pr_A}(U):=\{ a\in A\; |\; \pr_A^{-1}(a)\subset U \}$, and the left adjoint $\Einv_{\pr_A}$ is given by $\Einv_{\pr_A}(U):=\{a\in A\;| \exists b\in A\x B (b\in \pr_A^{-1}(a)\cap U)\}$.
\end{example}
The category of pos-doctrines $\PD$ is a 2-category, where:
\begin{itemize}
\item a \bemph{1-cell} is a pair $(F,b)$
$$\duemorfismo{\mC}{P}{F}{b}{\mD}{R}$$
such that $\freccia{\mC}{F}{\mD}$ is a finite product preserving functor and: $$\freccia{P}{b}{R\circ F^{\op}}$$ is a natural transformation.
\item a \bemph{2-cell} is a natural transformation $\freccia{F}{\theta}{G}$ such that for every $A$ in $\mC$ and every $\alpha$ in $PA$, we have: 
$$b_A(\alpha)\leq R_{\theta_A}(c_A(\alpha)).$$
\end{itemize}

We denote as $\ED$ the 2-full subcategory of $\SD$ whose elements are existential pos-doctrines, and whose 1-cells are those 1-cells of $\PD$ which preserve the existential structure. Similarly, we  denote by $\UD$ the 2-full subcategory of $\SD$ whose elements are universal pos-doctrines, and whose 1-cells are those 1-cells of $\SD$ which preserve the universal structure.

\section{Quantifiers and completions}
In this section we recall from \cite{ECRT} the \emph{existential completion} and we show how this construction can be dualised to obtain a free construction which adds right adjoints along the projections, called \emph{universal completion}.

\bigskip
\textbf{Existential completion}. Let $\posdoctrine{\mC}{P}$ be a pos-doctrine. The \bemph{existential completion} $\posdoctrine{\mC}{\compex{P}}$ of $P$ is a pos-doctrine such that, for every object $A$ of $\mC$, the poset $\compex{P}(A)$ is defined as follows:
\begin{itemize}
\item \bemph{objects:} triples $(A,B,\alpha)$, where $A$ and $B$ are objects of $\mC$ and $\alpha\in P(A\x B)$.
\item \bemph{order:}  $(A,B,\alpha)\leq (A,C,\beta)$ if there exists an arrow $\freccia{A\x B}{f}{C}$ of $\mC$ such that: $$ \alpha\leq P_{\angbr{\pr_A}{f}}(\beta)$$
where $\freccia{A\x B}{\pr_A}{A}$ is the projection on $A$.
\end{itemize}
The functor $\freccia{\compex{P}(C)}{\compex{P}_f}{\compex{P}(A)}$ sends an object $(C,D,\gamma)$ of $\compex{P}(C)$ to the object $(A,D,P_{f \times 1_D}(\gamma))$ of $\compex{P}(A)$, where $\pr_A, \pr_D$ are the projections from $A\times D$.
\bigskip

The logical intuition is that an element $(A,B,\alpha)$ of the fibre $\compex{P}(A)$ represents a predicate $[a:A,b:B]\; | \;\exists b:B \phi(a,b) $.

Recall from \cite{ECRT} that the previous construction provides a free completion, i.e. it extends to a 2-functor which is left adjoint to the forgetful functor. We summarise the main properties of the existential completion in the following theorem and we refer to \cite{ECRT} for all details. Notice that this construction was originally presented for pos-doctrines factoring through the inclusion functor $\infsl \hookrightarrow \pos$. However, as observed in \cite{ECRT} itself, this notion works for general pos-doctrines as well.

\begin{therm}\label{theorem existential comp il lax-idemp}
The pos-doctrine $\compex{P}$ is existential, and the 2-monad $(\compex{\mT},\mu^{\existential}, \eta^{\existential})$ on $\SD$  is lax-idempotent. Moreover we have the isomorphism $\ED\cong \alg{\compex{\mT}}$ between the 2-category of existential pos-doctrines, and that of strict algebras for $\compex{\mT}$.
\end{therm}

\begin{remark}[Prenex normal form]\label{rem prenex normal form}
Observe that in the existential completion of a pos-doctrine $P$, every object $(A,B,\alpha)\in \compex{P}(A)$ equals: $$(A,B,\alpha
)=\compex{\Einv}_{\pr_A}\compex{\eta}_{A\x B} (\alpha).$$
\end{remark}

By dualizing the previous construction, we define the \emph{universal completion} of a pos-doctrine. Similarly to the existential case, the intuition is that an element $(A,B,\alpha)$ of the fibre $\compun{P}(A)$ of the new doctrine that we are going to define represents a predicate $ [a:A,b:B]\; | \; \forall b:B \phi(a,b) $.

\bigskip
\textbf{Universal completion}. Let $\posdoctrine{\mC}{P}$ be a pos-doctrine. The \bemph{universal completion} $\posdoctrine{\mC}{\compun{P}}$ of $P$ is a pos-doctrine such that, for every object $A$ of $\mC$, the poset $\compun{P}(A)$ is defined as follows:
\begin{itemize}
\item \bemph{objects:} triples $(A,B,\alpha)$, where $A$ and $B$ are objects of $\mC$ and $\alpha\in P(A\x B)$.
\item \bemph{order:}  $(A,B,\alpha)\leq (A,C,\beta)$ if there exists an arrow $\freccia{A\x C}{g}{B}$ of $\mC$ such that: $$ P_{\angbr{\pr_A}{g}}(\alpha)\leq \beta$$
where $\freccia{A\x C}{\pr_A}{A}$ is the projection on $A$.
\end{itemize}
Whenever $\freccia{A}{f}{C}$ is an arrow of $\mC$, the functor $\freccia{\compun{P}(C)}{\compun{P}_f}{\compun{P}(A)}$ is defined as for the existential completion.

\medskip

\begin{therm} \label{q}
The pos-doctrine $\compun{P}$ is universal.
\end{therm}

\begin{proof}
\textit{Part I. Existence of right adjoints.} Let $\posdoctrine{\mC}{P}$ be a pos-doctrine and let for every $A_1$ and $A_2$ be objects of $\mC$. The assignment $(A_1\times A_2,B,\beta)\mapsto(A_1,A_2\times B,\beta)$ defines a functor $\freccia{\compun{P}(A_1\times A_2)}{\compun{\Ainv}_{\pr_1}}{\compun{P}(A_1)}$ since, whenever $\freccia{A_1\times A_2\times C}{g}{B}$ is an arrow witnessing that: $$(A_1\times A_2,B,\beta)\leq (A_1\times A_2,C,\gamma)$$ in $P(A_1\times A_2)$, then the arrow $\freccia{A_1\times A_2\times C}{\langle \pr_{A_2}, g\rangle}{A_2 \times B}$ witnesses that $(A_1, A_2 \times B,\beta)\leq (A_1,A_2\times C,\gamma)$ in $P(A_1)$. Let us verify that $\compun{\Ainv}_{\pr_1}$ is right adjoint to $\compun{P}_{\pr_1}$. Let $(A_1,B,\beta)$ be an object of $\compun{P}(A_1)$ and let $(A_1\times A_2,C,\gamma)$ be an object of $\compun{P}(A_1\times A_2)$. If 
$$\compun{P}_{\pr_1}(A_1,B,\beta)=(A_1\times A_2,B,P_{\pr_1 \times 1_B}(\beta))\leq (A_1\times A_2,C,\gamma)$$
 then there is an arrow $\freccia{A_1\times A_2\times C}{g}{B}$ such that: 
$$P_{\langle \pr_{A_1},g\rangle}(\beta)=P_{(\pr_1 \times 1_B)\langle \pr_{A_1\times A_2},g \rangle}(\beta)=P_{\langle \pr_{A_1\times A_2},g \rangle}(P_{\pr_1 \times 1_B}(\beta))\leq \gamma.$$ 
This means that $(A_1,B,\beta) \leq (A_1,A_2\times C,\gamma)=\compun{\Ainv}_{\pr_1}(A_1\times A_2, C,\gamma)$. 

Viceversa, if the latter holds, that is, there is an arrow $\freccia{A_1\times A_2\times C}{h}{B}$ such that $P_{\langle \pr_{A_1},h \rangle}(\beta)\leq \gamma$, then: $$P_{\langle \pr_{A_1\times A_2},g \rangle}(P_{\pr_1 \times 1_B}(\gamma))=P_{\langle \pr_{A_1},h \rangle}(\beta)\leq \gamma$$ which implies that $\compun{P}_{\pr_1}(A_1,B,\beta)\leq (A_1\times A_2,C,\gamma)$.

\textit{Part II. Beck-Chevalley condition.} Let us consider the pullback square of a projection along a given arrow $f$ of $\mC$, which is of the form: $$\quadratocomm{B\times C}{B}{A \times C}{A.}{\pr_B}{f \times 1_C}{f}{\pr_A}$$ Then the corresponding: $$\quadratocomm{\compun{P}(A\times C)}{\compun{P}(A)}{\compun{P}(B\times C)}{\compun{P}(B)}{\compun{\Ainv}_{\pr_A}}{\compun{P}_{f \times 1_C}}{\compun{P}_{f}}{\compun{\Ainv}_{\pr_{B}}}$$
commutes, since it is the case that: $$\compun{P}_{f}\compun{\Ainv}_{\pr_A}(A\times C,D,\delta)=\compun{P}_{f}(A,C \times D,\delta)=(B, C\times D,P_{(f \times 1_C)\times 1_D}(\delta))$$ and that: $$\begin{aligned}\compun{\Ainv}_{\pr_{B}}\compun{P}_{f\times 1_C}(A \times C,D,\delta)&=\compun{\Ainv}_{\pr_{B}}(B \times C,D,P_{(f \times 1_C)\times 1_D}(\delta))\\&=(B,C\times D,P_{(f \times 1_C)\times 1_D}(\delta))\end{aligned}$$ whenever $(A\times C,D,\delta)$ is an object of $\compun{P}(A \times C)$.

\end{proof}
\textit{Notation}. We denote by $\freccia{\pos}{(-)^{\op}}{\pos}$ the functor which inverts the order of a poset. In  other words, if $(A,\leq_A)$ is a poset, then $(A,\leq_A)^{\op}:=(A,\leq_{A^{\op}})$ is the poset whose objects are the ones of $A$ and $a\leq_{A^{\op}} b$ if and only if $b\leq_A a$.

\begin{proposition}\label{prop un= (-)^op((-)^op P)^ex..}
Whenever $\posdoctrine{\mC}{P}$ is a pos-doctrine, it is the case that: $$\compun{P}\cong (-)^{\op}\compex{((-)^{\op}P)}.$$
\end{proposition}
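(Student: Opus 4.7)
The plan is to exhibit the claimed isomorphism as the identity on underlying data: both $\compun{P}(A)$ and $\compex{((-)^{\op}P)}(A)$ consist of the same triples $(A,B,\alpha)$ with $\alpha\in P(A\times B)$, so it only remains to check that the two order relations agree and that the reindexing functors coincide.

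Setting $Q:=(-)^{\op}P$, so that $\alpha\leq_{Q(X)}\beta$ unfolds to $\beta\leq_{P(X)}\alpha$ while $Q_f$ coincides with $P_f$ on underlying elements (only its monotonicity being affected), I would first unfold the existential completion: $(A,B,\alpha)\leq(A,C,\beta)$ in $\compex{Q}(A)$ iff there exists $f\colon A\times B\to C$ of $\mC$ such that $\alpha\leq_{Q(A\times B)} Q_{\langle\pr_A,f\rangle}(\beta)$, which rewrites as $P_{\langle\pr_A,f\rangle}(\beta)\leq_{P(A\times B)}\alpha$.

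The outer $(-)^{\op}$ then reverses this relation among triples, so in $((-)^{\op}\compex{Q})(A)$ we have $(A,B,\alpha)\leq(A,C,\beta)$ iff there exists $g\colon A\times C\to B$ with $P_{\langle\pr_A,g\rangle}(\alpha)\leq_{P(A\times C)}\beta$, which is precisely the order defining $\compun{P}(A)$. For the functoriality, the formula defining $\compex{Q}_f$ depends only on the underlying action of $Q_{(-)}=P_{(-)}$, and applying $(-)^{\op}$ pointwise does not alter this underlying map, so it matches the explicit definition of $\compun{P}_f$ given earlier. Naturality of the identity-on-triples assignment in the base category is then immediate.

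The only real subtlety is to keep track of the two order reversals---one inside $P$ via the inner $(-)^{\op}$, one outside applied to the whole of $\compex{Q}$---and to observe that their combined effect is to swap the role of source and target in the existential witness, converting a witnessing arrow $A\times B\to C$ into a witnessing arrow $A\times C\to B$, which is exactly the shape required by the universal completion.
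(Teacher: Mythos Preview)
Your proposal is correct and follows essentially the same approach as the paper: both arguments identify the underlying triples of $\compun{P}(A)$ and $(-)^{\op}\compex{((-)^{\op}P)}(A)$ as identical, then unwind the two order reversals to show that the existential witness $A\times B\to C$ in $\compex{Q}$ becomes, after both reversals, a witness $A\times C\to B$ matching the universal-completion order, and finally observe that the reindexing formula is unchanged.
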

\begin{proof}
Let $A$ be an object of $\mC$. The categories: $$((-)^{\op}\compex{((-)^{\op}P)})(A)=(\compex{((-)^{\op}P)}A)^{\op}\text{ and }\compex{((-)^{\op}P)}A$$ share the object class. Hence an object of the former is a triple: $$(A,B,\beta \in ((-)^{\op}P))(A\times B)=P(A\times B)^{\op}),$$ that is, a triple of the form $(A,B,\beta \in P(A\times B))$, since $P(A\times B)$ and $P(A\times B)^{\op}$ share the object class. This is nothing but an object of $\compun{P}(A)$. As the functors $((-)^{\op}\compex{((-)^{\op}P)})_{f}$ and $\compun{P}_{f}$ both send such an object to $(C,B,P_{f\times 1_B}(\beta))$ (being $\freccia{C}{f}{A}$ an arrow of $\mC$), we are left to verify that the morphism classes (that is, the ordering relations) of $(\compex{((-)^{\op}P)}A)^{\op}$ and $\compun{P}(A)$ coincide.

Let $(A,D,\delta)$ be another object of the common object class and let us assume that $(A,B,\beta)\leq (A,D,\delta)$ in $(\compex{((-)^{\op}P)}A)^{\op}$, which means that $(A,D,\delta)\leq (A,B,\beta)$ in $\compex{((-)^{\op}P)}A$. This corresponds to the existence of an arrow $\freccia{A\times D}{g}{B}$ such that: $$\delta \leq ((-)^{\op}P)_{\langle \pr_A,g \rangle}(\beta)=(P_{\langle \pr_A,g \rangle})^{\op}(\beta)=P_{\langle \pr_A,g \rangle}(\beta)$$ in $P(A\times D)^{\op}$, that is, $P_{\langle \pr_A,g \rangle}(\beta)\leq \delta$ in $P(A\times D)$. This condition is nothing but the existence of the arrow $(A,B,\beta)\leq (A,D,\delta)$ in $\compun{P}(A)$. 
\end{proof}
Notice that by to Proposition \ref{prop un= (-)^op((-)^op P)^ex..}, one can prove the analogous result of Theorem \ref{theorem existential comp il lax-idemp}.

\begin{therm}\label{theorem universal comp is colax-idem}
The universal completion extends to a 2-adjunction, and the corresponding 2-monad $(\compun{\mT},\mu^{\universal}, \eta^{\universal})$ on $\SD$ is colax-idempotent. Moreover we have the isomorphism $\UD\cong \alg{\compun{\mT}}$ between the 2-category of universal pos-doctrines, and that of strict algebras for $\compun{\mT}$.
\end{therm}

\begin{remark}[Prenex normal form]
Observe that as for the existential completion, in the universal completion of a doctrine we have that every formula admits a prenex normal form.
\end{remark}
\begin{remark}
Note that, as \cite{ECRT}, the universal and existential completions can be generalised for an arbitrary class $\Lambda$ of morphisms closed under pullbacks, compositions and which contains identities, i.e. they can be used to freely add right (and left) adjoints along the morphisms of the class $\Lambda$.
\end{remark}

\begin{remark}\label{poset-refl-dial-remark}
Recall from \cite{Hofstra2010} that the dialectica construction \cite{DEPAIVA1991,HYLAND2002} decomposes into two steps from a modern categorical perspective, following the quantifier pattern of the original translation. The dialectica category $\dial{p}$ associated to a fibration $p$, hence in particular to an indexed category (see \cite{CLTT}), is obtained by first applying the monad which freely adds simple universal quantification and then applying the monad which freely adds simple existential quantification.

Analogously, by applying the corresponding additions of quantifiers to a given pos-doctrine  $\posdoctrine{\mC}{P}$ one gets the poset reflection of the dialectica category $\dial{P}$ to coincide with the category $\compex{(\compun{P})}(1)$. 

\end{remark}

\section{Preservation of logical structures}
When a new free completion is introduced, it is natural to study and to understand which structures are preserved by this construction. 

This section is completely devoted to this analysis of the two quantifier completions presented in the previous section. Note that, by Proposition \ref{prop un= (-)^op((-)^op P)^ex..}, we can easily translate a result of preservation for the existential completion in its dual version for the universal completion. This argument will be heavily used along our presentation. 

We start by recalling a first result in this direction provided in \cite{ECRT}.

\begin{proposition}\label{p}
Let $\posdoctrine{\mC}{P}$ be a pos-doctrine such that every fibre $P(A)$ has finite meets and such that every re-indexing functor $P_f$ preserves them. Then the existential doctrine $\posdoctrine{\mC}{\compex{P}}$ has finite meets in every fibre $\compex{P}(A)$ and the functors $\compex{P}_f$ preserve them. 
\end{proposition}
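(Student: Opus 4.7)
The plan is to build the finite meets in each fibre $\compex{P}(A)$ using the finite products of $\mC$ and the hypothesis that $P$ has fibred finite meets preserved by re-indexing, then to check that the newly defined meets are themselves stable under re-indexing along $\compex{P}_f$.

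For the top element, I would take $\top_{\compex{P}(A)} := (A, 1, \top_{P(A \times 1)})$ where $1$ is the terminal object of $\mC$. For any object $(A, B, \beta)$, the unique arrow $!\colon A \times B \to 1$ witnesses $(A, B, \beta) \leq \top_{\compex{P}(A)}$: indeed, since $P_{\langle \pr_A, !\rangle}$ preserves the top, $\beta \leq \top_{P(A \times B)} = P_{\langle \pr_A, !\rangle}(\top_{P(A \times 1)})$.

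For binary meets, guided by the intuition $\exists b.\,\alpha(a,b) \wedge \exists c.\,\gamma(a,c) \equiv \exists (b,c).\,\alpha(a,b) \wedge \gamma(a,c)$, I would set
\[
(A, B, \alpha) \wedge (A, C, \gamma) \;:=\; \bigl(A,\; B \times C,\; P_{\langle \pr_A, \pr_B\rangle}(\alpha) \wedge P_{\langle \pr_A, \pr_C\rangle}(\gamma)\bigr),
\]
where $\pr_A, \pr_B, \pr_C$ are the projections out of $A \times B \times C$, and the meet on the right exists in $P(A \times B \times C)$ by hypothesis. The arrows $\pr_B, \pr_C\colon A \times (B \times C) \to B, C$ witness that this triple is a lower bound for both $(A, B, \alpha)$ and $(A, C, \gamma)$. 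For the universal property, given $(A, D, \delta)$ together with witnesses $g\colon A \times D \to B$ and $h\colon A \times D \to C$ showing $(A, D, \delta)$ is below both meetands, the paired arrow $\langle g, h\rangle\colon A \times D \to B \times C$ is the required witness; verifying this reduces to the computation $P_{\langle \pr_A, \langle g, h\rangle\rangle}(P_{\langle \pr_A, \pr_B\rangle}(\alpha)) = P_{\langle \pr_A, g\rangle}(\alpha)$ (and analogously for $\gamma$), obtained by composing the two substitutions and using functoriality of $P$, together with the hypothesis that $P_{\langle \pr_A, \langle g, h\rangle\rangle}$ preserves meets.

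For preservation under re-indexing, given $f\colon A' \to A$, a direct computation shows that the triples $\compex{P}_f((A, B, \alpha) \wedge (A, C, \gamma))$ and $\compex{P}_f(A, B, \alpha) \wedge \compex{P}_f(A, C, \gamma)$ coincide strictly: by functoriality of $P$, the substitutions $P_{f \times 1_{B \times C}} \circ P_{\langle \pr_A, \pr_B\rangle}$ and $P_{\langle \pr_{A'}, \pr_B\rangle} \circ P_{f \times 1_B}$ agree (both come from the same composite arrow in $\mC$), and symmetrically for the $\gamma$-component, while $P_{f \times 1_{B \times C}}$ preserves binary meets by assumption. The same argument gives preservation of the top.

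The only obstacle is the amount of notational bookkeeping: one must keep track of which projection is used at each step and check that the mediating diagrams in $\mC$ commute, so that the functoriality of $P$ yields the needed equalities in the fibres. No essentially new idea is required beyond exploiting the existing meets on $P(A \times B \times C)$ to absorb the existentially quantified variables of both meetands into a single product.
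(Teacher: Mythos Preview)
Your proposal is correct and matches the construction the paper uses: the paper does not spell out a proof of this proposition (it is cited from \cite{ECRT}), but in the sketch of Theorem~\ref{theorem  P^ex lat doc} it records exactly your formulas $(A,1,\top_{A\times 1})$ for the top and $(A,B\times C,P_{\langle\pr_A,\pr_B\rangle}\beta\wedge P_{\langle\pr_A,\pr_C\rangle}\gamma)$ for the binary meet, so your argument is precisely the intended one.
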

By Proposition \ref{prop un= (-)^op((-)^op P)^ex..} the dual result holds for the universal completion.

\begin{proposition}\label{p'}
Let $\posdoctrine{\mC}{P}$ be a pos-doctrine such that every fibre $P(A)$ has finite joins and such that every re-indexing functor $P_f$ preserves them. Then the universal doctrine $\posdoctrine{\mC}{\compun{P}}$ has finite joins in every fibre $\compun{P}(A)$ and the functors $\compun{P}_f$ preserve them. 
\end{proposition}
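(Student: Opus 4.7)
The plan is to deduce this proposition from its existential counterpart (the proposition immediately preceding it) by invoking the duality described in Proposition \ref{prop un= (-)^op((-)^op P)^ex..}, which was explicitly flagged in the excerpt as the device that will be heavily used throughout the section. The argument consists in translating ``finite joins preserved by re-indexing'' through the functor $(-)^{\op}$, applying the already established meet-preservation result, and translating back.

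First I would observe that the endofunctor $(-)^{\op}\colon \pos \to \pos$ exchanges finite meets and finite joins: a poset $(A,\leq_A)$ has finite joins iff $(A,\leq_A)^{\op}$ has finite meets, and a monotone map preserves the former iff its opposite preserves the latter. Composing with the slat-doctrine $P$ on both sides, the doctrine $(-)^{\op}P$ assigns to each object the opposite poset, and to each arrow the same underlying function $P_f$ viewed as a monotone map between the opposite posets. Consequently, the hypothesis that every fibre $P(A)$ has finite joins and each $P_f$ preserves them is equivalent to the statement that every fibre of $(-)^{\op}P$ has finite meets and each re-indexing of $(-)^{\op}P$ preserves them.

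Next I would apply the already stated existential preservation proposition to the slat-doctrine $(-)^{\op}P$. This yields that $\compex{((-)^{\op}P)}$ has finite meets in every fibre and that each re-indexing functor $\compex{((-)^{\op}P)}_f$ preserves them. Now I post-compose with $(-)^{\op}$ once more: by the same duality observation applied in reverse, the slat-doctrine $(-)^{\op}\compex{((-)^{\op}P)}$ has finite joins in every fibre and the re-indexing functors preserve them.

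Finally, Proposition \ref{prop un= (-)^op((-)^op P)^ex..} gives the isomorphism $\compun{P}\cong (-)^{\op}\compex{((-)^{\op}P)}$ in $\SD$, and this isomorphism is fibrewise an isomorphism of posets commuting with re-indexing, so it transports the join structure and its preservation from one side to the other. There is no real obstacle here; the only point that deserves care is to make explicit that the natural isomorphism of Proposition \ref{prop un= (-)^op((-)^op P)^ex..} is computed on each fibre as the identity on the underlying set of triples $(A,B,\alpha)$ (as shown in the proof of that proposition), so that the transport of finite joins is immediate and not merely formal.
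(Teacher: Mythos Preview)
Your proposal is correct and follows exactly the approach the paper takes: the paper simply states that, by Proposition \ref{prop un= (-)^op((-)^op P)^ex..}, the dual of the existential meet-preservation result holds for the universal completion, and your argument spells out precisely this duality reduction.
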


The two previous propositions reflect the logical distributive property of the quantifiers over disjunctions and conjunctions. Observe that, under the hypotheses of Proposition \ref{p}, it is the case that the family of functors $\compex{\Einv}_{\pr}$, where $\pr$ is a projection, satisfies the Frobenius reciprocity (see \cite{ECRT}). The dual property holds for the universal completion.

Now we show that, under the right hypotheses, the existential completion preserves finite joins, and then, dually, that the universal completion preserves finite meets.

Recall from \cite{CARBONILACK1993} that a category with finite products and finite sums is said to be \bemph{distributive} if the canonical arrow:
$$\freccia{(A\x B)+(A\x C)}{\theta}{A\x (B+C)}$$
is an isomorphism. The canonical arrow is $\theta:= \coprodotto{\angbr{\pr_A}{j_C'\pr_C}}{\angbr{\pr_A}{j_B'\pr_B}}$, i.e. it is the unique arrow such that the diagram:
$$\xymatrix@+2pc{
A\x C \ar[dr]_{\angbr{\pr_A}{j_C'\pr_C}}\ar[r]^{j_{A\x C}\;\;\;\;\;\;\;\;} & (A\x C)+(A\x B)\ar[d]^{\theta} & A\x B \ar[l]_{\;\;\;\;\;\;\;\;j_{A\x B}}\ar[dl]^{\angbr{\pr_A}{j_B'\pr_B}}\\
& A\x (B+C)
}$$
commutes, where $j'$ are the injections in $B+C$.


To simplify the notation and the readability of the statements we introduce the following definition. 

\begin{definition}
A \bemph{lat-doctrine} is a functor $\latdoctrine{\mC}{P}$, where $\mC$ is a distributive category and $\lat$ is the category of lattices, i.e. finitely complete and finitely cocomplete posets, and finite sup\&inf-preserving maps, i.e. finite limit and finite colimit preserving functors.
\end{definition}

\begin{therm}\label{theorem  P^ex lat doc}
Let $\latdoctrine{\mC}{P}$ be a lat-doctrine such that the images through $P$ of the injections $\freccia{A}{j_A}{A+B}$ have left adjoints $\Einv_{j_A}$ which satisfy the Beck-Chevalley condition for pullbacks (when they exist) of injections which are injections themselves. Then the following properties hold.
\begin{enumerate}
\item The pos-doctrine $\compex{P}$ is a lat-doctrine.
\item Suppose that there is an arrow $\freccia{1}{c}{C}$ for every non-initial object $C$ of $\mC$ (\textnormal{$\mC$ has points}). Then the images through $\compex{P}$ of the injections: $$\freccia{A}{j_A}{A+B}$$ have left adjoints $\compex{\Einv}_{j_A}$ which satisfy the Beck-Chevalley condition for pullbacks (when they exist) of injections which are injections themselves.
\item Suppose that $\mC$ has points and that the images through $P$ of the injections $j_A$ have right adjoints $\Ainv_{j_A}$ which satisfy the Beck-Chevalley condition for pullbacks (when they exist) of injections which are injections themselves. Then the images through $\compex{P}$ of the injections $j_A$ have right adjoints $\compex{\Ainv}_{j_A}$ which satisfy the Beck-Chevalley condition for pullbacks (when they exist) of injections which are injections themselves.
\end{enumerate}
\end{therm}

\begin{proof}
\begin{enumerate}
\item Let $A$ be an object of $\mC$ and let us verify that $\compex{P}(A)$ has finite coproducts (\textit{Part I}). Moreover, let us verify that finite coproducts are preserved by the images through $\compex{P}$ of any arrow of $\mC$ of target $A$ (\textit{Part II}).

\textit{Part I. Finite coproducts in $\compex{P}(A)$.} It is the case that $(A,0,\bot_{A\times O})$ is initial in $\compex{P}(A)$ whenever $0$ is initial in $\mC$. Moreover, whenever $(A,B,\alpha)$ and $(A,C,\beta)$ are two elements of $\compex{P}(A)$, it is the case that:
$$(A,B,\alpha)\vee (A,C,\beta):=(A,B+C,P_{\theta^{-1}}(\Einv_{j_{A\x B}}(\alpha)\vee \Einv_{j_{A\x C}}(\beta)))$$ is their coproduct. In order to verify this, at first we prove that $(A,B,\alpha)\leq ((A,B+C,\Einv_{j_{A\x B}}(\alpha)\vee \Einv_{j_{A\x C}}(\beta))$. Observe that the diagram:
$$\xymatrix{
&& A\x B \ar[dd]^{\pr_A}\ar[ld]_{j_{A\x B}}\\
&(A\x B)+(A\x C)\ar[ld]_{\theta}\\
A\x (B+C) \ar[rr]_{\pr_A'}&& A
}$$
commutes by definition of $\theta$. Moreover it is the case that:
$$P_{\theta j_{A \x B}}(P_{\theta^{-1}}(\Einv_{j_{A\x B}}\alpha\vee \Einv_{j_{A\x C}}\beta))=P_{j_{A\x B}}(\Einv_{j_{A\x B}}\alpha)\vee P_{j_{A\x B}}(\Einv_{j_{A\x C}}\beta)\geq \alpha$$
since $\alpha\leq P_{j_{A\x B}}\Einv_{j_{A\x B}}(\alpha)$.
Similarly one can prove that $(A,C,\beta)\leq (A,B,\alpha)\vee (A,C,\beta)$.

Now assume that $(A,B,\alpha)\leq (A,D,\gamma)$ and $(A,C,\beta)\leq (A,D,\gamma)$, i.e. that there exist $\freccia{A\x B}{f_1}{D}$ and $\freccia{A\x C}{f_2}{D}$ such that $\alpha\leq P_{\angbr{\pr_A}{f_1}}(\gamma)$ and $\beta \leq P_{\angbr{\pr_A}{f_2}}$. By the universal property of coproducts, the diagram:
$$\xymatrix@+3pc{
A\x C \ar[rd]_{\angbr{\pr_A}{f_2}} \ar[r]^{j_{A\x C}} & (A\x C)+(A\x B) \ar[d]^{\coprodotto{\angbr{\pr_A}{f_2}}{\angbr{\pr_A}{f_1}}}& A\x B\ar[l]_{j_{A\x B}}\ar[dl]^{\angbr{\pr_A}{f_1}} \\
& A \x D
}$$
commutes. In order to simplify the notation, let $c:=\coprodotto{\angbr{\pr_A}{f_2}}{\angbr{\pr_A}{f_1}}$. First notice that the following equality holds again by the universal property of coproducts:
$$\pr_A'c=\pr_A \theta$$
where $\freccia{A\x D}{\pr_A'}{A}$ and $\freccia{A\x (B+C)}{\pr_A}{A}$. Hence the diagram:
$$\xymatrix{
&& A\x (B+ C) \ar[dd]^{\pr_A}\ar[ld]_{\theta^{-1}}\\
&(A\x B)+(A\x C)\ar[ld]_{c}\\
A\x D \ar[rr]_{\pr_A'}&& A
}$$
commutes. Now by our assumption, and since $\angbr{\pr_A}{f_1}=cj_{A\x B}$, it is the case that $\alpha\leq P_{j_{A\x B}} (P_{c}(\gamma)).$ Since $\Einv_{j_{A\x B}}\dashv P_{j_{A\x B}}$, it holds that $\Einv_{j_{A\x B}}(\alpha)\leq P_c(\gamma)$ and hence: $$P_{\theta^{-1}}(\Einv_{j_{A\x B}}(\alpha))\leq P_{\theta^{-1}}
P_c(\gamma).$$ Similarly one gets that $P_{\theta^{-1}}(\Einv_{j_{A\x C}}(\beta))\leq P_{\theta^{-1}}
P_c(\gamma)$ and therefore we can conclude that:
$$P_{\theta^{-1}}(\Einv_{j_{A\x B}}(\alpha) \vee \Einv_{j_{A\x C}}(\beta))\leq P_{\theta^{-1}}
P_c(\gamma)$$
that is, $(A,B,\alpha)\vee (A,C,\beta)\leq (A,D,\gamma)$.

\medskip

\textit{Part II. Preservation of finite coproducts.} Let $\freccia{D}{f}{A}$ be an arrow of $\mC$. Then it is the case that $\compex{P}_f(A,0,\bot_{A\times 0})=(D,0,\bot_{D\times 0})$. Moreover, one might observe that: $$P_{f\times 1_{B+C}}P_{\theta^{-1}}(\Einv_{j_{A\times B}}\alpha\wedge\Einv_{j_{A\times C}}\beta)=P_{\theta^{-1}}(\Einv_{j_{D\times B}}(P_{f\times 1_B}\alpha) \wedge \Einv_{j_{D\times C}}(P_{f\times 1_C}\beta))$$ by naturality of the class of isomorphisms: $$\freccia{X\times B + X\times C}{\theta}{X \times (B+C)},$$ where $X$ is any object of $\mC$, and by the Beck-Chevalley property of the class of left adjoints to the injections of coproducts. Observe indeed that the left-hand square of the commutative diagram: $$\xymatrix@-0.7pc{D\times B \ar[dd]_{f\times 1_B} \ar[rr]^>>>>>>>>{j_{D\times B}} && (D\times B)+(D \times C)\ar[dd]^{(f\times 1_B)+(f\times 1_C)} \ar[rr]^{\theta} && D\times (B+C) \ar[dd]^{f\times 1_{B+C}} \ar[rr]^>>>>>>>>{\pi_D} && D \ar[dd]^f \\ \\ A\times B \ar[rr]_>>>>>>>>{j_{A\times B}} && (A\times B)+(A \times C) \ar[rr]_{\theta} && A\times (B+C) \ar[rr]_>>>>>>>>{\pi_A} && A }$$ is a pullback, since the right-hand square and the outer one (whose horizontal arrows are projections from $D\times B$ and from $A\times B$) are pullbacks (and $\theta$ are isos). The same holds with with $j_{D\times C}$ and $j_{A\times C}$ instead of $j_{D\times B}$ and $j_{A\times B}$ respectively. This implies that $\compex{P}_f$ preserves binary joins.

\medskip

\item \textit{Part I. Existence of left adjoints.} Let $(A+B,C,\alpha) \in \compex{P}(A+B)$ and let $(A,D,\delta)\in \compex{P}(A)$. We define $\compex{\Einv}_{j_A}(A,D,\delta)$ to be the object $(A+B,D,P_{\theta^{-1}}\Einv_{j_{A\times D}}(\delta))$ being $\theta$ the isomorphism $(A\times D)+(B\times D)\to (A+B)\times D$ and $j_{A\times D}$ the injection $A \times D \to (A \times D)+(B\times D)$. Then the conditions $\compex{\Einv}_{j_A}(A,D,\delta)\leq (A+B,C,\alpha)$ and $(A,D,\delta)\leq \compex{P}_{j_A}(A+B,C,\alpha)$ are equivalent to the conditions:
\begin{align}\label{ecco}
&\textit{there is }\freccia{(A+B)\times D}{g}{C} \notag \\ &\textit{such that }P_{\theta^{-1}}\Einv_{j_{A\times D}}(\delta)\leq P_{\langle \pr_{A+B},g \rangle}(\alpha)
\end{align}
and:
\begin{align}\label{ecco2}
&\textit{there is }\freccia{A\times D}{h}{C} \notag \\ &\textit{such that }\delta\leq P_{\langle \pr_A,h\rangle}P_{j_A \times 1_C}(\alpha)
\end{align}
respectively. Being $\theta$ an isomorphism and by left adjointness of $\Einv_{j_{A\times D}}$, (1) is equivalent to the existence of an arrow $\freccia{(A+B)\times D}{g}{C}$ such that $\delta\leq P_{j_{A\times D}}P_{\langle\pr_{A+B}\theta,g\theta \rangle}(\alpha)$. If \eqref{ecco} holds for some $g$, then let $h:=g \theta j_{A\times D}$. Viceversa, if \eqref{ecco2} holds for some $h$, then let $g:= [h,c!]\theta^{-1}$, where $!$ is the unique arrow $B\times D \to 1$ (intuitively, we do not take care of the ‘‘$C$-values" that $g$ assumes over ‘‘$B\times D$-part of its domain", hence we might define its ‘‘restriction" to $B\times D$ as the arrow $c!$, which is the $c$-constant map $B\times D \to C$). In both cases one might verify that: $$\langle\pr_{A+B}\theta,g\theta \rangle{j_{A\times D}}={(j_A \times 1_C)}{\langle \pr_A,h\rangle}$$ which implies that \eqref{ecco} and \eqref{ecco2} are equivalent. Hence $\compex{\Einv}_{j_A}$  is left adjoint to $\compex{P}_{j_A}$ and one might as usual verify it to satisfy the Beck-Chevalley condition.

\medskip

\textit{Part II. Beck Chevalley condition.} Let us assume that the square: $$\xymatrix{C \ar[r]^{j_C} \ar[d]_f & C+D \ar[d]^{g} \\ A \ar[r]_{j_A} & A+B }$$ is a pullback and let $(A,E,\epsilon)$ be an object of $\compex{P}(A)$. We are left to prove that $\compex{\Einv}_{j_C}\compex{P}_f(A,E,\epsilon)=(C+D,E,P_{\theta^{-1}}\Einv_{j_{C\times E}}P_{f\times 1_E}\epsilon)$ and $\compex{P}_g\compex{\Einv}_{j_A}(A,E,\epsilon)=(C+D,E,P_{g\times 1_E}P_{\varphi^{-1}}\Einv_{j_{A\times E}}\epsilon)$ are equal, where $\theta$ and $\varphi$ are the isomorphisms $C\times E + D\times E \to (C+ D)\times E$ and $A\times E + B\times E \to (A+B)\times E$ respectively. Let us consider the following commutative diagram: $$\xymatrix@-0.7pc{C\times E \ar[dd]_{f\times 1_E} \ar[rr]^>>>>>>>>{j_{C\times E}} && C\times E + D\times E \ar[dd]_{\varphi^{-1}(g\times 1_E)\theta} && (C+D)\times E \ar[dd]^{g\times 1_E} \ar[ll]_{\theta^{-1}} \ar[rr]^>>>>>>>>{\pr_{C+D}} && C+D \ar[dd]^{g} \\ \\  A\times E \ar[rr]_>>>>>>>>{j_{A\times E}} && A\times E + B\times E && (A+B)\times E \ar[ll]^{\varphi^{-1}} \ar[rr]_>>>>>>>>{\pr_{A+B}} && A+B}$$ which is a pullback, since its horizontal arrow $A\times E \to A + B$ equals the arrow $j_A\pr_A$, the arrow $f\times 1_E$ is the pullback of $f$ along $\pr_A$ and $f$ is the pullback of $g$ along $j_A$ (hence $f\times 1_E$ is indeed the pullback of $g$ along $j_A\pr_A$). As the right-hand square is pullback, we deduce that the left-hand square is a pullback as well. Therefore, by Beck Chevalley condition on $\Einv$, it is the case that: $$\begin{aligned} P_{\theta^{-1}}\Einv_{j_{C\times E}}P_{f\times 1_E}\epsilon&= P_{\theta^{-1}}P_{\varphi^{-1}(g\times 1_E)\theta}\Einv_{j_{A\times E}}\epsilon \\ &=P_{g\times 1_E}P_{\varphi^{-1}}\Einv_{j_{A\times E}}\epsilon \end{aligned}$$ and we are done.

\medskip

\item Let $(A+B,C,\alpha) \in \compex{P}(A+B)$ and let $(A,D,\delta)\in \compex{P}(A)$. Then we define $\compex{\Ainv}_{j_A}(A,D,\delta)$ to be the object $(A+B,D,P_{\theta^{-1}}\Ainv_{j_{A\times D}}(\delta))$ being $\theta$ the isomorphism $(A\times D)+(B\times D)\to (A+B)\times D$ and $j_{A\times D}$ the injection $A \times D \to (A \times D)+(B\times D)$. Then the conditions $ (A+B,C,\alpha)\leq \compex{\Ainv}_{j_A}(A,D,\delta)$ and $\compex{P}_{j_A}(A+B,C,\alpha)\leq (A,D,\delta)$ are equivalent to the conditions:
\begin{align}\label{cond 1 adj destri}
&\textit{there is }\freccia{(A+B)\times C}{g}{D} \notag \\ &\textit{such that }\alpha\leq P_{\langle \pr_{A+B},g \rangle}P_{\theta^{-1}}\Ainv_{j_{A\times D}}(\delta)
\end{align}
and:
\begin{align}\label{cond 2 adj destri}
&\textit{there is }\freccia{A\times C}{h}{D} \notag \\ &\textit{such that }P_{j_A \times 1_C}(\alpha)\leq P_{\langle \pr_A,h\rangle}(\delta)
\end{align}
respectively. Observe that if \eqref{cond 1 adj destri} holds, then:
$$P_{j_A\x 1_C}(\alpha)\leq P_{\theta^{-1}\angbr{ \pr_{A+B}}{g}j_A\x 1_C}\Ainv_{j_{A\times D}}(\delta)$$
and it is the case that:
$$\theta^{-1}\angbr{\pr_{A+B}}{g}j_A\x 1_C=\theta^{-1}\angbr{j_A\pr_A}{g(j_A \x 1_C)}=j_{A\x D}\angbr{\pr_A}{g(j_A\x 1_C)}$$
because in our case $\freccia{(A+B)\x D}{\theta^{-1}}{(A\x D)+(B\x D)}$ and
$$\theta j_{A\x D}=\angbr{j_A\pr_A}{\pr_D}.$$
Moreover $P_{j_{A\x D}}\Ainv_{j_{A\x D}}\leq \id$ since $P_{j_{A\x B}}\dashv \Ainv_{j_{A\x D}} $, and then:
$$\begin{aligned}P_{j_A\x 1_C}(\alpha)&\leq P_{\theta^{-1}\angbr{ \pr_{A+B}}{g}j_A\x 1_C}\Ainv_{j_{A\times D}}(\delta)\\&=P_{\angbr{\pr_A}{g(j_A\x 1_C)}}P_{j_{A\x D}}\Ainv_{j_{A\x D}} (\delta)\\&\leq P_{\angbr{\pr_A}{g(j_A\x 1_C)}}(\delta).\end{aligned}$$
Then we can set: $$\freccia{A\x C}{h:=g(j_A\x 1_C)}{D}$$ so that 
$P_{j_A\x 1_C}(\alpha)\leq P_{\angbr{\pr_A}{h}}(\delta)$
as required in \eqref{cond 2 adj destri}.

Suppose that \eqref{cond 2 adj destri} holds. First notice that $j_A\x 1_C= \theta j_{A\x C}$, hence:
$$\alpha\leq P_{\theta^{-1}}\Ainv_{j_{A\x C}} P_{\angbr{\pr_A}{h}}(\delta).$$
Now we define $\freccia{(A+B)\x C}{g}{D}$ as the following composition of arrows:
$$\xymatrix@+1pc{
(A+B)\x C \ar[r]^{\theta^{-1}} & (A\x C)+(C\x B)\ar[r]^>>>>>>>{\id_{A\x C}+ !} & (A\x C)+1 \ar[r]^{\;\;\coprodotto{h}{d}}& D
}$$ 
where $!$ is the terminal arrow, and $\coprodotto{h}{d}$ is the coproduct of $\freccia{A\x C}{h}{D}$ and the constant $\freccia{1}{d}{D}$. Let us verify that the commutative diagram:
$$\xymatrix@+2pc{
A\x C \ar[d]_{\angbr{\pr_A}{h}}\ar[r]^{ j_{A\x C}} & (A\x C)+( B\x C)\ar[d]^{\theta^{-1}\angbr{\pr_{A+B}}{g}\theta}\\
A\x D \ar[r]_{j_{A\x D}} & (A\x D)+ (B\x D).
}$$
is a pullback, in order to use the Beck-Chevalley condition. Consider the following diagram:
$$\xymatrix@+1pc{
E \ar@/^1.8pc/[rrrd]^{\angbr{a_1}{a_2}} \ar@/_1.5pc/[ddr]_{\angbr{a_3}{a_4}} \ar@{.>}[rd]^{\angbr{a_3}{a_2}}\\
&A\x C \ar[d]_{\angbr{\pr_A}{h}}\ar[r]^>>>>>>>>{j_{A\x C}} & (A\x C)+(B\x C) \ar[r]^{\theta} & (A+B)\x C \ar[d]^{\angbr{\pr_{A+B}}{g}}\\
&A\x D \ar[r]_>>>>>>>>{j_{A\x D}} & (A\x D)+(B\x D) \ar[r]_{\theta} & (A+B)\x D.
}$$
and assume that $\angbr{\pr_{A+B}}{g}\angbr{a_1}{a_2}= j_{A\x D} \theta \angbr{a_3}{a_4}$. By the equality $\theta j_{A\x D}=\angbr{j_A\pr_A}{\pr_D}$, the following ones hold:
\begin{itemize}
\item $a_1=j_A a_3;$
\item $g\angbr{a_1}{a_2}=a_4.$
\end{itemize}
First we prove that the choice of $\angbr{a_3}{a_2}$ makes the left triangle commutes:
$$\angbr{a_3}{a_4}=\angbr{a_3}{h\angbr{a_3}{a_2}}.$$
Then we need to show that $h\angbr{a_3}{a_2}=a_4=g\angbr{a_1}{a_2}=g\angbr{j_A a_3}{a_2}$. Observe that $\angbr{j_A a_3}{a_2}=(j_A\x  1_C)\angbr{a_3}{a_2}=\theta j_{A\x C}\angbr{a_3}{a_2}$, and then, by definition of $g$, it is the case that: $$g\angbr{a_1}{a_2}=(\coprodotto{h}{d}(\id_{A\x C}+!)\theta^{-1})(\theta j_{A\x C}\angbr{a_3}{a_2})=h\angbr{a_3}{a_2}$$ where the last equality holds as the composition:
$$\xymatrix@+2pc{ A\x C \ar[r]^>>>>>>>>{j_{A\x C} }& (A\x C)+ (B\x C) \ar[r]^{\id_{A\x C}+ !} & (A\x C)+1}$$
equals the arrow $\xymatrix@+2pc{A\x C \ar[r]^{j_{A\x C}} & (A\x C)+1}$ and, by the universal property of the coproduct, it is the case that $\coprodotto{h}{d} j_{A\x C}=h.$ Hence the left triangle commutes. 

Now we consider the top triangle. This commutes because:
$$\theta j_{A\x C}\angbr{a_3}{a_2}=\angbr{j_A\pr_A}{\pr_C}\angbr{a_3}{a_2}=\angbr{j_A a_3}{a_2}$$
which is equal to $\angbr{a_1}{a_2}$ by the hypothesis $a_1=j_A a_3$.

This concludes that the previous commutative square is indeed a pullback, and then, since $\theta$ is invertible, the first square is a pullback. Therefore, by the Beck-Chevalley condition, we get that: $$\alpha\leq P_{\angbr{\pr_{A+B}}{g}}P_{\theta^{-1}} \Ainv_{j_{A\x D}}(\delta).$$ Hence $\compex{P}_{j_A}\dashv \compex{\Ainv}_{j_A}$, and one can directly check that these satisfy the Beck-Chevalley condition. The proof is analogous to \textit{Part II} of 2.
\end{enumerate}

\end{proof}

\begin{remark}[\textit{Considerations about the hypotheses of Theorem \ref{theorem  P^ex lat doc}}]
In the general hypotheses of our statement, the class of the injections of $\mC$ is not necessarily closed under pullback. However, here we require the Beck-Chevalley condition for left/right adjoints to the pullback functor along the injections to hold just for existing pullbacks of injections which are injections as well. We do so for sake of generality: this hypothesis is clearly satisfied in any concrete framework where injections are fully stable under pullback and the Beck-Chevalley condition holds w.r.t. them (e.g. extensive categories). From now on, whenever we deal with a class $\mathfrak{C}$ of arrows of $\mathcal{C}$ which are not necessarily stable under pullback and a family of functors indexed by $\mathfrak{C}$ which are respectively left (or right) adjoint to the images through $P$ of the arrows of $\mathfrak{C}$, we commit the following abuse of notation: instead of talking about \textit{the Beck-Chevalley condition for pullbacks (when they exist) of arrows of $\mathfrak{C}$ which are in $\mathfrak{C}$ themselves}, we simply write \textit{the Beck-Chevalley condition}.

Secondly, we underline the importance of the request that $\mC$ has points in the statement of Theorem \ref{theorem  P^ex lat doc} in order to prove that the functor $\compex{\Einv}_{j_A}$ is actually left adjoint to $\compex{P}_{j_A}$. If $(A+B,C,\alpha) \in \compex{P}(A+B)$ and $(A,D,\delta)\in \compex{P}(A)$ then the conditions $\compex{\Einv}_{j_A}(A,D,\delta)\leq (A+B,C,\alpha)$ and $(A,D,\delta)\leq \compex{P}_{j_A}(A+B,C,\alpha)$ (that we want to be equivalent) are witnessed by some arrows: $$\freccia{(A+B)\times D}{g}{C}\text{ and }\freccia{A\times D}{h}{C}$$ respectively, which satisfy the usual inequalities. If the former condition is witnessed by an arrow $g$, then one verifies that its restriction to $A \times D$ witnesses the latter. However, if an arrow $h$ witnesses the latter condition, then, in order to get an arrow $g$ witnessing the former, we need to arbitrarily expand it to the full domain $(A\times D)+(B\times D)$. Here is precisely where we need the existence of constant living in $D$, when $D$ is non-initial. The same consideration holds for $\compex{\Ainv}_{j_A}$.
\end{remark}

\begin{remark}\label{generalised joins}
The way we define the binary join of two objects in a fiber of the existential completion of a given doctrine $\latdoctrine{\mC}{P}$ generalises to arbitrary joins, assuming that these exist in the fibres of $P$ and that $\mC$ has arbitrary products.
\end{remark}

\begin{example}\label{esempio valeria}
Let us consider the subobject lat-doctrine $\latdoctrine{\mC}{\Sub_{\mC}}$ over a finitely complete category $\mC$ with disjoint coproducts which are stable under pullback. Observe that these are the usual assumptions over $\mC$ in \cite{DEPAIVA1991}, and they imply that $\mC$ is a distributive category. Let $A,B$ be objects of $\mC$ and let $j_A$ be the monic injection $A \to A+B$. Observe that $\Sub_{j_A}$ has the left adjoint, which is given by the functor $\freccia{\Sub(A)}{\Einv_{j_A}}{\Sub(A+B)}$ which acts as the post-composition by $j_A$.
\end{example}
Again by Proposition \ref{prop un= (-)^op((-)^op P)^ex..} the dual result of Theorem \ref{theorem  P^ex lat doc} holds for the universal completion.

\begin{therm}\label{co-theorem  P^ex lat doc}
Let $\mC$ be a distributive category and let $\latdoctrine{\mC}{P}$ be a lat-doctrine such that the images through $P$ of the injections $\freccia{A}{j_A}{A+B}$ have right adjoints $\Ainv_{j_A}$ which satisfy the Beck-Chevalley condition. Then the following properties hold.
\begin{enumerate}
\item The pos-doctrine $\compun{P}$ is a lat-doctrine.
\item Suppose that $\mC$ has points. Then the images through $\compun{P}$ of the injections: $$\freccia{A}{j_A}{A+B}$$ have right adjoints $\compun{\Ainv}_{j_A}$ which satisfy the Beck-Chevalley condition.
\item Suppose that $\mC$ has points and that the images through $P$ of the injections $j_A$ have left adjoints $\Einv_{j_A}$ which satisfy the Beck-Chevalley condition. Then the images through $\compun{P}$ of the injections $j_A$ have left adjoints $\compun{\Einv}_{j_A}$ which satisfy the Beck-Chevalley condition.
\end{enumerate}
\end{therm}

\proof Let $Q:=(-)^{\op} P$. Then $Q$ is a lat-doctrine satisfying the hypotheses of Theorem \ref{theorem  P^ex lat doc}. Hence $\compex{Q}$ satisfies the theses of Theorem \ref{theorem  P^ex lat doc}, which corresponds to saying that $\compun{P}=(-)^{\op} \compex{Q}$ (see Proposition \ref{prop un= (-)^op((-)^op P)^ex..}) satisfies the theses of the current statement.
\endproof

\begin{example}
Observe that the subobject lat-doctrine $\latdoctrine{\mC}{\Sub}$ presented in Example \ref{esempio valeria} has right adjoints along the inclusions. In particular the right adjoint $\freccia{\Sub(A)}{\Ainv_{j_A}}{\Sub(A+B)}$ of $\Sub_{j_A}$ is the functor sending a subobject represented by a mono $\freccia{S}{s}{A}$ to the subobject represented by the arrow: $$\freccia{S + B}{s + 1_B}{A + B}$$ which is indeed a mono: if $a$ be an arrow $X \to S + B$ then $X$ has a structure of coproduct together with the injections $\freccia{X_S}{a^*i_S}{X}$ and $\freccia{X_B}{a^*i_B}{X}$ obtained by pulling back along $a$ the injections: $$\freccia{S}{i_S}{S+B}\text{ and }\freccia{B}{i_B}{S+B}$$ respectively. Let us denote as $a_1$ and $a_2$ the unique arrows $X_S \to S$ and $X_B \to B$ respectively such that $a =a_1 + a_2$. Observe that the injections $\freccia{S}{i_S}{S+B}$ and $\freccia{B}{i_B}{S+B}$ are the pullbacks along $s + 1_B$ of the arrows $j_A$ and $j_B$, hence $a^*i_S$ and $a^*i_B$ are the pullbacks of $j_A$ and $j_B$ alons $(s+1_B)a$. This implies that, whenever $b$ is another arrow $X \to S + B$ such that $(s+1_B)a=(s+1_B)b$ then, by appliying the same procedure to $b$, we obtain the same coproduct structure $(a^*i_S,a^*i_B)$ over $X$. In particular $j_A s a_1 = (s+1_B)a(a^*i_S)=(s+1_B)b(a^*i_S)=j_A s b_1$, which implies that $a_1=b_1$, as $j_As$ is a monomorphism. Analogously $a_2=b_2$, hence $a=b$ and $s + 1_B$ is proven to be a monomorphism.

Let $\freccia{S}{s}{A}$ be a subobject of $A$ and let $\freccia{T}{t}{A+B}$ be a subobject of $A+B$. Then the conditions ${j_A}^*t=\Sub_{j_A}(t)\leq s$ and $t \leq \Ainv_{j_A}(s)=s+1_B$  are equivalent: if the latter holds then a mono witnessing the former exists by the universal property of the pullback; if the former holds for a mono $\freccia{{j_A}^*T}{m}{S}$ then $\freccia{T}{m+1_B}{S+B}$ witnesses the latter.
\end{example}
\section{On the distributive structure}

In the previous sections we proved that, under given hypotheses, the existential and universal completion of a lat-doctrine is a lat-doctrine as well. It is natural to ask under which hypoteses these completions preserve also the distributivity of the fibres. Essentially this depends on the Frobenius reciprocity (or on its dual version, which we call the co-Frobenius reciprocity - i.e. the one with $\Ainv$ and $\vee$ in place of $\Einv$ and $\wedge$ respectively) of the left (or right, respectively) adjoints to the pullbacks along the injections. Let us present this phenomenon in detail.

\begin{definition}
A \bemph{distributive lat-doctrine} is a functor $\distlatdoctrine{\mC}{P}$, where $\mC$ is a distributive category and $\distlat$ is the category of distributive lattices and finite co/limit-preserving (i.e. finite sup/inf-preserving) maps between them.
\end{definition}

\begin{therm} \label{Teorema 4 + frobenius}
Let $\distlatdoctrine{\mC}{P}$ be a distributive lat-doctrine such that the images through $P$ of the injections $\freccia{A}{j_A}{A+B}$ have left adjoints $\Einv_{j_A}$ which satisfy the Beck-Chevalley condition and the Frobenius reciprocity. Then the following properties hold.

\begin{enumerate}
\item The pos-doctrine $\compex{P}$ (which is a lat-doctrine by Theorem \ref{theorem  P^ex lat doc}) is a distributive lat-doctrine.

\item Suppose that $\mC$ has points. Then the left adjoints $\compex{\Einv}_{j_A}$ to the images through $\compex{P}$ of the injections $j_A$ (which exist and satisfy the Beck-Chevalley condition by Theorem \ref{theorem  P^ex lat doc}) happen to satisfy the Frobenius reciprocity.
\end{enumerate}
\end{therm}

%
\begin{proof}
\textit{Part I. Distributivity in $\compex{P}(A)$.} Let $(A,B,\beta)$, $(A,C,\gamma)$ and $(A,D,\delta)$ be objects of $\compex{P}(A)$ and let us consider the objects $((A,B,\beta)\vee (A,C,\gamma))\wedge (A,D,\delta)$ and $((A,B,\beta)\wedge (A,D,\delta)) \vee ((A,C,\gamma)\wedge (A,D,\delta))$, which are respectively $(A,(B+C)\times D,x)$ and $(A,B\times D+C\times D,y)$ whose third components $x$ and $y$ equal: $$P_{\langle \pr_A,\pr_{B+C} \rangle}P_{\theta^{-1}}(\Einv_{j_{A\times B}}\beta \vee \Einv_{j_{A\times C}}\gamma)\;)\wedge P_{\langle\pr_A,\pr_D\rangle}\delta$$ and: $$P_{\varphi^{-1}}(\;\Einv_{j_{A\times B\times D}}(P_{\langle \pr_A,\pr_B \rangle}\beta \wedge P_{\langle \pr_A,\pr_D \rangle}\delta)\vee\Einv_{j_{A\times C\times D}}(P_{\langle \pr_A,\pr_C \rangle}\gamma \wedge P_{\langle \pr_A,\pr_D \rangle}\delta)\;)$$ respectively, where $\theta$ and $\varphi$ are the isomorphisms ${(A\times B + A\times C)}\to{A\times (B+C)}$ and ${A\times B\times D+A\times C\times D}\to{A\times (B\times D+C\times D)}$ of $\mC$. As the following diagram of isomorphisms: $$\xymatrix{A\times (B+C)\times D \ar[rr]^{\theta^{-1} \times 1_D} \ar[d]_{1_A \times \omega^{-1}} && (A\times B + A\times C)\times D \ar[d]^{\psi^{-1}} \\ A \times (B\times D + C\times D) \ar[rr]_{\varphi^{-1}} && A\times B \times D + A\times C\times D}$$ commutes, in order to verify that $1_A \times \omega^{-1}$ constitutes an isomorphism $((A,B,\beta)\vee (A,C,\gamma))\wedge (A,D,\delta)\to((A,B,\beta)\wedge (A,D,\delta)) \vee ((A,C,\gamma)\wedge (A,D,\delta))$, it is enough to verify the elements: $$(\Einv_{j_{A\times B\times D}}P_{\langle \pr_A,\pr_B \rangle}\beta\vee\Einv_{j_{A\times C\times D}}P_{\langle \pr_A,\pr_C \rangle}\gamma)\wedge P_{[\langle\pr_A,\pr_D \rangle,\langle\pr_A,\pr_D \rangle]}\delta$$ and: $$\Einv_{j_{A\times B\times D}}(P_{\langle \pr_A,\pr_B \rangle}\beta \wedge P_{\langle \pr_A,\pr_D \rangle}\delta)\vee\Einv_{j_{A\times C\times D}}(P_{\langle \pr_A,\pr_C \rangle}\gamma \wedge P_{\langle \pr_A,\pr_D \rangle}\delta)$$ of $P(A\times B \times D + A \times C\times D)$ to be equal, since they are the images through $P_{(\theta \times 1_D)\psi}$ and $P_{\phi}$ respectively of the third components of the objects $((A,B,\beta)\vee (A,C,\gamma))\wedge (A,D,\delta)$ and $((A,B,\beta)\wedge (A,D,\delta)) \vee ((A,C,\gamma)\wedge (A,D,\delta))$ respectively. This is the case, by distributivity of $P(A\times B \times D + A \times C\times D)$ and being: $$\begin{aligned}&\Einv_{j_{A\times B\times D}}P_{\langle \pr_A,\pr_B \rangle}\beta \wedge P_{[\langle\pr_A,\pr_D \rangle,\langle\pr_A,\pr_D \rangle]}\delta = \\ =\;&\Einv_{j_{A\times B\times D}}(P_{\langle \pr_A,\pr_B \rangle}\beta \wedge P_{j_{A\times B\times D}}P_{[\langle\pr_A,\pr_D \rangle,\langle\pr_A,\pr_D \rangle]}\delta)= \\ =\;& \Einv_{j_{A\times B\times D}}(P_{\langle \pr_A,\pr_B \rangle}\beta \wedge P_{\langle\pr_A,\pr_D\rangle}\delta) \end{aligned}$$ by the Frobenius reciprocity on the injection: $$\freccia{A\times B \times D}{j_{A\times B \times D}}{A\times B \times D + A\times C \times D}$$ (and the same holds with $C$ in place of $B$).

\medskip

\textit{Part II. Frobenius reciprocity.} Let $(A,C,\gamma)$ and $(A+B,D,\delta)$ be objects of $\compex{P}(A)$ and $\compex{P}(A+B)$. We are left to verify the objects $(\;\compex{\Einv}_{j_A}(A,C,\gamma)\;)\wedge (A+B,D,\delta)$ and  $\compex{\Einv}_{j_A}(\;(A,C,\gamma)\wedge \compex{P}_{j_A}(A+B,D,\delta)\;)$, which equal: $$(\;A+B,C\times D, (\;P_{\langle\pr_{A+B},\pr_C \rangle}P_{\theta^{-1}}\Einv_{j_{A\times C}}\gamma\;)\wedge P_{\langle \pr_{A+B},\pr_D \rangle}\delta \; )$$ and: $$(\;A+B,C\times D,P_{\varphi^{-1}}\Einv_{j_{A\times C\times D}}(\;P_{\langle \pr_A,\pr_C \rangle}\gamma \wedge P_{\langle \pr_A,\pr_D \rangle}P_{j_A\times 1_D}\delta\;)\;)$$ respectively, to be equal, where $j_A$ is the injection $A \to A+B$ and $\theta$ and $\varphi$ are the isomorphisms $A\times C+ B\times C \to (A+B)\times C$ and $A\times C\times D +B\times C\times D\to (A+B)\times C\times D$ respectively. This is the case, since: $$\begin{aligned} &P_{\varphi^{-1}}\Einv_{j_{A\times C\times D}}(\;P_{\langle \pr_A,\pr_C \rangle}\gamma \wedge P_{\langle \pr_A,\pr_D \rangle}P_{j_A\times 1_D}\delta\;)= \\ =\;&P_{\varphi^{-1}}\Einv_{j_{A\times C\times D}}(\;P_{\langle \pr_A,\pr_C \rangle}\gamma \wedge P_{j_{A\times C\times D}}P_{\varphi}P_{\langle \pr_{A+B},\pr_D \rangle}\delta\;)= \\ \text{\{Frobenius r.\}}=\;& (\;P_{\varphi^{-1}}\Einv_{j_{A\times C\times D}}P_{\langle \pr_A,\pr_C \rangle}\gamma\;)\wedge P_{\langle \pr_{A+B},\pr_D \rangle}\delta= \\ \text{\{Beck-Chevalley c.\}}=\;& (\;P_{\varphi^{-1}}P_{[\langle \pr_A,\pr_C\rangle,\langle \pr_B,\pr_C\rangle]}\Einv_{j_{A\times C}}\gamma\;)\wedge P_{\langle \pr_{A+B},\pr_D \rangle}\delta= \\ =\;&(\;P_{\langle\pr_{A+B},\pr_C \rangle}P_{\theta^{-1}}\Einv_{j_{A\times C}}\gamma\;)\wedge P_{\langle \pr_{A+B},\pr_D \rangle}\delta \end{aligned}$$ and we are done.
\end{proof}
By Proposition \ref{prop un= (-)^op((-)^op P)^ex..}, as usual we deduce the following dual statement:

\begin{therm} \label{co - Teorema 4 + frobenius}
Let $\distlatdoctrine{\mC}{P}$ be a distributive lat-doctrine such that the images through $P$ of the injections $\freccia{A}{j_A}{A+B}$ have right adjoints $\Ainv_{j_A}$ which satisfy the Beck-Chevalley condition and the co-Frobenius reciprocity. Then the following properties hold.

\begin{enumerate}
\item The pos-doctrine $\compex{P}$ (which is a lat-doctrine by Theorem \ref{co-theorem  P^ex lat doc}) happens to be a distributive lat-doctrine.

\item Suppose that $\mC$ has points. Then the right adjoints $\compex{\Ainv}_{j_A}$ to the images through $\compex{P}$ of the injections $j_A$ (which exist and satisfy the Beck-Chevalley condition by Theorem \ref{co-theorem  P^ex lat doc}) happen to satisfy the co-Frobenius reciprocity.
\end{enumerate}
\end{therm}

Theorem \ref{Teorema 4 + frobenius} represents a specialisation of points \textit{1.} and \textit{2.} of Theorem \ref{theorem  P^ex lat doc} for distributive lat-doctrines. An analogous specialisation of point \textit{3.} is not as natural as those ones:

\begin{proposition}\label{bastarda}
Let $\distlatdoctrine{\mC}{P}$ be a distributive lat-doctrine such that the images through $P$ of the injections $\freccia{A}{j_A}{A+B}$ have left adjoints $\Einv_{j_A}$ which satisfy the Beck-Chevalley condition and the Frobenius reciprocity. Then the following property holds.

\begin{enumerate}
\item[3.] Suppose that $\mC$ has points and that the images through $P$ of the injections $j_A$ have right adjoints $\Ainv_{j_A}$ which satisfy the Beck-Chevalley condition and the co-Frobenius reciprocity. Then the right adjoints $\compex{\Ainv}_{j_A}$ to the images through $\compex{P}$ of the injections $j_A$ (which exist and satisfy the Beck-Chevalley condition by Theorem \ref{theorem  P^ex lat doc}) happen to satisfy the co-Frobenius reciprocity if and only if the following equation: \begin{equation}\label{ecco3} P_{\phi^{-1}} \Ainv_{j_{A\times (C+D)}}P_{\theta^{-1}}\Einv_{j_{A\times C}}=P_{\omega^{-1}} \Einv_{j_{(A+ B)\times C}}P_{\psi^{-1}}\Ainv_{j_{A\times C}}
\end{equation} holds for every commutative diagram of the form: $$\xymatrix@-0.5pc{A\times C+ A \times D \ar[r]^{\theta} & A \times (C+D) \ar[rr]^>>>>>>>>{j_{A\times (C+D)}} && A\times (C+D) + B \times (C+D) \ar[d]^{\phi} \\ A \times C \ar[u]^{j_{A\times C}} \ar[d]_{j_{A\times C}} & && (A+B) \times (C+D) \\ A\times C+ B \times C \ar[r]^{\psi} & (A+B)\times C \ar[rr]^>>>>>>>>{j_{(A+B)\times C}} && (A+B)\times C + (A+B) \times D. \ar[u]_{\omega}}$$
\end{enumerate}
\end{proposition}
\proof
Assuming the equality \eqref{ecco3} to hold for any commutative diagram as in the statement, one directly gets the co-Frobenius reciprocity: $$(\;\compex{\Ainv}_{j_A}(A,C,\gamma)\;)\vee (A+B,D,\delta) = \compex{\Ainv}_{j_A}(\;(A,C,\gamma)\vee \compex{P}_{j_A}(A+B,D,\delta)\;)$$ to hold for every $(A,C,\gamma)\in \compex{P}(A)$ and $(A+B,D,\delta)\in\compex{P}(A+B)$. Viceversa, if: $$(\;\compex{\Ainv}_{j_A}(A,C,\gamma)\;)\vee (A+B,D,\delta) = \compex{\Ainv}_{j_A}(\;(A,C,\gamma)\vee \compex{P}_{j_A}(A+B,D,\delta)\;)$$ for every $(A,C,\gamma)\in \compex{P}(A)$ and $(A+B,D,\delta)\in\compex{P}(A+B)$, by taking $\delta = 0_{(A+B)\times D}$ this equality collapses into the equality \eqref{ecco3} (being $\gamma$ an arbitrary element of $P(A\times C)$).
\endproof

\noindent
As usual, Theorem \ref{bastarda} dualises for the universal completion.

\noindent
We conclude the current section with the following:

\begin{remark}\label{preciao}
We observe that Theorem \ref{Teorema 4 + frobenius} tells us that the existential completion preserves type theories whose predicative part is written in the Coherent fragment of logic. More generally, by Remark \ref{generalised joins}, the same holds for type theories whose predicative part is written in the Geometric fragment of logic. See Remark \ref{ciao} for more details and examples.
\end{remark}

\section{Combining quantifier completions}
In the previous section we showed some preservation properties of the existential and the universal completion.  Here we show how to combine these two constructions.

\begin{therm}\label{theorem exist. comp preserv forall}
Let $\posdoctrine{\mC}{P}$ be a universal pos-doctrine and suppose that $\mC$ has exponents. Then $\posdoctrine{\mC}{\compex{P}}$ is existential and universal, i.e. the existential completion preserves the universal structure.
\end{therm}

\begin{proof}[Sketch of Proof. See the details in the Appendix \ref{Appendix E}]
Let $A_1,A_2$ be objects of $\mC$, and let $\freccia{A_1\x A_2}{\pr_{A_1}}{A_1}$ be the first projection. Let: $$\freccia{\compex{P}(A_1\x A_2)}{\compex{\Ainv}_{\pr_{A_1}}}{\compex{P}(A_1) }$$ be defined by: $$ (A_1\x A_2,B,\alpha)\mapsto (A_1,B^{A_2},\Ainv_{\angbr{\pr_1}{\pr_3}} P_{\angbr{\pr_1,\pr_2}{\eval \angbr{\pr_2}{\pr_3}}}(\alpha))$$ where $\pr_i$ are the projections from $A_1\x A_2 \x B^{A_2}$ and $\freccia{A_2 \x B^{A_2}}{\ev}{B}$ is the evaluation map. The intuition is that the right adjoints act by mapping a formula $\exists b:B \alpha (a_1,a_2,b)\mapsto \exists f:B^{A_2}\forall a_2:A_2 \alpha(a_1,a_2, f(a_2))$.
\end{proof}

Let $\exponent{\SD}$ be the 2-full subcategory of $\SD$ whose objects are pos-doctrines whose base category has exponents, and whose 1-cells are the 1-cells of $\SD$ which preserve the exponents. Similarly, we denote by $\exponent{\UD}$ and $\exponent{\ED}$ the two subcategories of $\exponent{\SD}$ of the universal and existential pos-doctrines whose base category has exponents.

\begin{therm}\label{theorem lifting}
The 2-monad $\freccia{\exponent{\SD}}{\compex{\mT}}{\exponent{\SD}}$ induces a 2-monad: $$\freccia{\exponent{\UD}}{\ltil{\compex{\mT}}}{\exponent{\UD}}.$$
\end{therm}
\begin{proof}[Sketch of Proof. See the details in the Appendix \ref{Appendix C}]
By Theorem \ref{theorem exist. comp preserv forall} the 2-monad $\compex{\mT}$ preserves the universal structure. A direct verification provides the right preservation of 1-cells as well as the fact that the unit and the counit preserve the universal structure.
\end{proof}

By Theorem \ref{theorem universal comp is colax-idem} it is the case that $\exponent{\UD}\cong \exponent{\alg{\compun{\mT}}}$ and then we can conclude that the existential completion induces a 2-monad
$$
\freccia{\exponent{\alg{\compun{\mT}}}}{\ovln{\compex{\mT}}}{\exponent{\alg{\compun{\mT}}}}.
$$
Observe that the 2-monad $\ovln{\compex{\mT}}$ is a \emph{lifting} of $\compex{\mT}$ on $\exponent{\alg{\compun{\mT}}}$, hence by the well-known characterization of the distributive laws of 2-monads, see for instance \cite{PDLAVB,UCTSBSSL}, we have the following result. See \cite{Hofstra2010} for the general case of arbitrary fibrations.
\begin{therm}
There exists a distributive law $\freccia{\compun{\mT}\compex{\mT}}{\lambda}{\compex{\mT}\compun{\mT}}$ and the 2-functor $\freccia{\exponent{\SD}}{\compex{\mT}\compun{\mT}}{\exponent{\SD}}$ is a 2-monad.
\end{therm}
By combining all previous statements, we finally infer the following result about existential and universal completion of lat-doctrines.
\begin{therm}\label{theorem ex un lat-doc}
Let $\latdoctrine{\mC}{P}$ be a lat-doctrine such that:
\begin{itemize}
\item the category $\mC$ has points;
\item the category $\mC$ has exponents;
\item the images $P_{j_A}$ of the injections $\freccia{A}{j_A}{A+B}$ have left and right adjoints $\Einv_{j_A}\dashv P_{j_A}\dashv \Ainv_{j_A}$, which satisfy $BC$.
\end{itemize}
Then $\latdoctrine{\mC}{\compex{(\compun{P})}}$ is an existential and universal lat-doctrine and the images $\compex{(\compun{P})}_{j_A}$ of the injection $\freccia{A}{j_A}{A+B}$ have left and right adjoints $\compex{(\compun{\Einv})}_{j_A}\dashv \compex{(\compun{P})}_{j_A}\dashv \compex{(\compun{\Ainv})}_{j_A}$, which satisfy $BC$.
\end{therm}

\proof
The statement is a consequence of Theorem \ref{theorem  P^ex lat doc}, Theorem \ref{co-theorem  P^ex lat doc} and Theorem \ref{theorem exist. comp preserv forall}.
\endproof

\begin{remark}
Let $\latdoctrine{\mC}{P}$ be a lat-doctrine which satisfies the hypotheses of Theorem \ref{theorem ex un lat-doc}. As exposed in Remark \ref{poset-refl-dial-remark}, remind that $\dial{P}$ is nothing but the fiber of $\compex{(\compun{P})}$ over the terminal object of $\mC$. Hence, by Theorem \ref{theorem ex un lat-doc}, we can conclude that the poset reflection of the dialectica category $\dial{P}$ is a lattice.
\end{remark}
The characterisation that we presented in the Proposition \ref{bastarda} of the preservation by the existential completion of the co-Frobenius reciprocity consists of a very strong condition that does not even hold in very natural enviroments, as shown in the following Example. Therefore, we conclude that the dialectica monad does not necessarily preserve the fibred-distributivity unless very restrictive and unusual conditions hold.

\begin{example}
Let us consider the usual subset-doctrine $\distlatdoctrine{\Set}{\Sub}$, which is a distributive lat-doctrine having both left and right adjoints to the pullbacks along injections which satisfy the Beck-Chevalley condition and the (co-)Frobenius reciprocity. Referring to the commutative diagram in the statement of Proposition \ref{bastarda}, whenever $S \subseteq A \times C$, it is the case that the left-hand side and the right-hand side of the equality (1) in Proposition \ref{bastarda} produce the subsets: $$S + B \times (C+D) \text{ and } S + B \times C$$ of $(A+B)\times (C+D)$ respectively. Therefore the equality (1) is not always satisfied. Hence, the doctrine $\compex{\Sub}$ is a distributive lat-doctrine that has both left and right adjoints to the pullbacks along the injections, but by Proposition \ref{bastarda} the right ones do not satisfy the co-Frobenius reciprocity anymore.

In particular, by the dual statement of Proposition \ref{bastarda}, it is the case that the lat-doctrine $\compex{(\compun{\Sub})}$ has both left and right adjoints to the pullbacks along the injections, but they do not satisfy the (co-)Frobenius reciprocities. Hence we cannot deduce that $\compex{(\compun{\Sub})}$ is distributive by applying Theorem \ref{Teorema 4 + frobenius} or Theorem \ref{co - Teorema 4 + frobenius}.
\end{example}

\section{Choice principles and quantifier completions}
The constructive features of choice principles play a fundamental role in several areas of mathematics and theoretical computer science. For example in a dependent type theory  satisfying  the  propositions  as types  correspondence  together  with  the  proofs-as-programs  paradigm, the validity of the unique choice rule or even more of the choice rule says that the extraction of a computable witness from an existential statement under hypothesis can be performed within the same theory. 

However in \cite{Maietti2017OnCR} the author shows  that  the  unique  choice  rule,  and  hence  the  choice  rule, are not valid in some important constructive theories such as Coquand’s Calculus of Constructions with indexed sum  types,  list  types  and  binary  disjoint  sums  and in  its  predicative version implemented in the intensional level of the Minimalist Foundation \cite{Maietti2017OnCR,MAIETTI2009319}.

In this section we show that quantifier completions carry on some strong choice principles, and the preservation of the logical structures suggests that this free-operation could be applied to extend a given theory to another one which satisfies the following principles.

Recall from \cite{MDM} that the existential completion of a doctrine satisfies the \emph{rule of choice}. 

\begin{therm}[Rule of Choice]\label{rem exist. comp. preserves meets}
If the fibres of a pos-doctrine $\posdoctrine{\mC}{P}$ have finite meets, then the existential completion of a primary doctrine satisfies the Rule of Choice (see \cite{MDM}), i.e.  if:
$$a:A \; |\;\top \vdash \exists b:B \; \alpha (a,b)$$
then there exists a term $ a:A\;| \; f(a):B$ called \emph{witness} such that:
$$ a:A\;|\; \top \vdash \alpha (a, f(a)).$$

\end{therm}

Again we can prove the dual property for the universal completion, and we call it \emph{counterexample property}. In this case, the logical intuition is that the \emph{witness} plays the rule of a \emph{counterexample}, i.e. if we have $\forall x:A \;\alpha(x) \vdash \bot$, then there exists a term $t$ such that $\alpha (t)\vdash \bot$. In other words, $t$ is the counterexample.
\begin{therm}[Counterexample Property]
If $\posdoctrine{\mC}{P}$ is a pos-doctrine whose fibres have finite joins, then the universal completion satisfies \emph{Counterexample Property}, i.e. if:
$$a:A \; | \; \forall b: B\; \alpha (a,b)\vdash \bot$$
then there exists a term $a:A\;|\; g(a):B$, which represents the \emph{counterexample} of the previous statement, such that:
$$a:A\; | \; \alpha (a,g(a))\vdash \bot.$$
\end{therm}
We conclude the section by proving that, under the assumption of Theorem \ref{theorem exist. comp preserv forall}, the existential completion of a universal doctrine satisfies the so called \emph{Skolemization principle}.

\begin{therm}[Skolemization]\label{theorem Skolem}
Let $\posdoctrine{\mC}{P}$ be a universal pos-doctrine with exponents, and consider its existential completion $\compex{P}$. For every $A_1,A_2$ objects of $\mC$ and every $\ovln{\alpha}=(A_1\x A_2,B,\alpha)\in \compex{P}(A_1\x A_2)$ it holds that: 
$$\compex{\Ainv}_{\pr_1}\compex{\Einv}_{\angbr{\pr_1}{\pr_2}}\eta_{A_1\x A_2\x B}(\alpha)=\compex{\Einv}_{\pr'_1}\compex{\Ainv}_{\angbr{\pr_1}{\pr_3}}\eta_{A_1\x A_2\x B^{A_2}} P_{\angbr{\pr_1,\pr_2}{\eval \angbr{\pr_2}{\pr_3}}}(\alpha)).$$

\end{therm}

\begin{proof}
Let $A_1,A_2$ be objects of $\mC$, and let $\freccia{A_1\x A_2}{\pr_{1}}{A_1}$ be the first projection. Recall that $\freccia{\compex{P}(A_1\x A_2)}{\compex{\Ainv}_{\pr_{1}}}{\compex{P}(A_1) }$ is defined by:
$$ (A_1\x A_2,B,\alpha)\mapsto (A_1,B^{A_2},\Ainv_{\angbr{\pr_1}{\pr_3}} P_{\angbr{\pr_1,\pr_2}{\eval \angbr{\pr_2}{\pr_3}}}(\alpha)).$$
By Remark \ref{rem prenex normal form}, it is the case that:
$$\compex{\Ainv}_{\pr_1}(\ovln{\alpha})=\compex{\Einv}_{\pr_1'} \eta_{A_1\x B^{A_2}}\Ainv_{\angbr{\pr_1}{\pr_3}} P_{\angbr{\pr_1,\pr_2}{\eval \angbr{\pr_2}{\pr_3}}}(\alpha).$$
By Theorem \ref{theorem lifting}, the unit of the 2-monad preserves the universal structure, hence we have that $\eta_{A_1\x B^{A_2}}\Ainv_{\angbr{\pr_1}{\pr_3}}=\compex{\Ainv}_{\angbr{\pr_1}{\pr_3}}\eta_{A_1\x A_2 \x B^{A_2}}$ and then: $$\compex{\Ainv}_{\pr_1}(\ovln{\alpha})=\compex{\Einv}_{\pr_1'} \compex{\Ainv}_{\angbr{\pr_1}{\pr_3}}\eta_{A_1\x A_2 \x B^{A_2}}P_{\angbr{\pr_1,\pr_2}{\eval \angbr{\pr_2}{\pr_3}}}(\alpha).$$ On the other hand, by Remark \ref{rem prenex normal form}, it is the case that:
$$ \compex{\Ainv}_{\pr_1}(A_1\x A_2,B,\alpha)=\compex{\Ainv}_{\pr_1}\compex{\Einv}_{\angbr{\pr_1}{\pr_2}}\eta_{A_1\x A_2\x B}(\alpha)$$
that is, the stated equality holds.
\end{proof}
The property proved in Theorem \ref{theorem Skolem} reflects the principle of \emph{Skolemization} introduced by G\"odel in his work on the dialectica interpretation \cite{Goedel58}. It is called (AC) by Toelstra in \cite{KGCW2}, since this principle is a form of choice, and it has the following presentation: $$ \forall x \exists y \alpha (x,y)\rightarrow \exists f \forall x \alpha (x, fx).$$ 

\begin{remark}\label{ciao}
Combining Theorem \ref{rem exist. comp. preserves meets} and Theorem \ref{Teorema 4 + frobenius}, we obtain an interesting tool to extend a type theory whose predicative part is written in the coherent fragment of first-order logic to another type theory which moreover satisfies the rule of choice.

More generally, we can extend a type theory whose predicative part is written in the geometric logic -e.g. the subobject-doctrine over a (quasi)topos- to another type theory which moreover satisfies the Rule of Choice (see Remark \ref{generalised joins} and Remark \ref{preciao}). Observe indeed that the subobject-doctrine over a quasitopos is both existential and elementary. Hence all the reindexing functors -and in particular the ones along the injections- have a left adjoint, so that the required hypotheses are satisfied.

A deeper analysis of the link between the existential completion and the validity of choice principles in toposes and quasitoposes is left as a future work.

\end{remark}

\section{Conclusions and further works}

We introduced and characterised the universal and existential completions, by showing those logical properties that are preserved by these constructions. We remark that the properties we needed in order to get them preserved (e.g. the existence of left and right adjoints over the injections) are all natural: they are true in many concrete instances enjoying a satisfying ‘‘power-set" algebra, like subobject doctrines over lextensive categories (see \cite{DEPAIVA1991} and Example \ref{example set-theor hyperdoctrine}).

One of the major benefits of this property-preservation analysis appears during its application to dialectica construction. During the current essay we only focused on the notion of poset-reflection of the dialectica category associated to a lattice-doctrine. In the future work we intend to generalise our approach in order to examine properties of the dialectica category itself and not just of its poset-reflection.

Moreover we are going to find the right hypotheses such that the universal and existential completion preserves the implication of intuitionistic logic, focusing on the applications. Finally, we are interested in applying our results about the quantifier completions to extend concrete theories or to characterise their fragments that satisfy the choice principles of our analysis. A comparison with the dialectica tripos and the results presented in \cite{Biering_dialecticainterpretations} is left as a future work.

\subsection*{Aknowledgements.} We would like to thank Nicola Gambino, Maria Emilia Maietti and Margherita Zorzi for their comments and suggestions.

\bibliographystyle{msclike}
\bibliography{biblio_tesi_PHD}  
\appendix

\section{Proof of Theorem \ref{theorem exist. comp preserv forall}}

\label{Appendix E}

We remind that the following argument shows a proof-irrelevant version of the corresponding result in \cite{Hofstra2010}.

\begin{proof}
\textit{Part I. Existence of right adjoints along projections.} Let $A_1,A_2$ be objects of $\mC$, and let $\freccia{A_1\x A_2}{\pr_{A_1}}{A_1}$ be the first projection. Let: $$\freccia{\compex{P}(A_1\x A_2)}{\compex{\Ainv}_{\pr_{A_1}}}{\compex{P}(A_1) }$$ be defined by: $$ (A_1\x A_2,B,\alpha)\mapsto (A_1,B^{A_2},\Ainv_{\angbr{\pr_1}{\pr_3}} P_{\angbr{\pr_1,\pr_2}{\eval \angbr{\pr_2}{\pr_3}}}(\alpha))$$ where $\pr_i$ are the projections from $A_1\x A_2 \x B^{A_2}$ and $\freccia{A_2 \x B^{A_2}}{\ev}{B}$ is the evaluation map. The intuition is that the right adjoints act by mapping a formula $\exists b:B \alpha (a_1,a_2,b)\mapsto \exists f:B^{A_2}\forall a_2:A_2 \alpha(a_1,a_2, f(a_2))$. Let us verify that $\compex{\Ainv}_{\pr_1}$ is right adjoint to $\compex{P}_{\pr_1}$.

Let $(A_1,C,\gamma)\in \compun{P}(A_1)$ and $(A_1\times A_2,B,\alpha)\in \compun{P}(A_1\times A_2)$. We are left to verify that the conditions:
$$(A_1 \times A_2,C,P_{\pr_{A_1}\times 1_C}(\gamma))=\compex{P}_{\pr_{A_1}}(A_1,C,\gamma)\leq (A_1\times A_2,B,\alpha)$$
and:
$$(A_1,C,\gamma)\leq \compex{\Ainv}_{\pr_{A_1}}(A_1\times A_2,B,\alpha)=(A_1,B^{A_2},\Ainv_{\angbr{\pr_1}{\pr_3}} P_{\angbr{\pr_1,\pr_2}{\eval \angbr{\pr_2}{\pr_3}}}(\alpha))$$
are equivalent. The former disequality is equivalent to the condition: \textit{there is an arrow} $\freccia{A_1\times A_2\times C}{g}{B}$ \textit{such that} $P_{\pr_{A_1}\times 1_C}(\gamma) \leq P_{\langle \pr_{A_1\times A_2},g\rangle}(\alpha)$, that is:
\begin{align}\label{ecco6}
&\textit{there is }\freccia{A_1\times A_2\times C}{g}{B} \notag\\ &\textit{such that }\gamma\leq \Ainv_{\pr_{A_1}\times 1_C} P_{\langle \pr_{A_1\times A_2},g\rangle}(\alpha)
\end{align}
while the latter holds precisely when:
\begin{align}\label{ecco7}
&\textit{there is }\freccia{A_1\times C}{h}{B^{A_2}} \notag\\ &\textit{such that } \gamma \leq P_{\langle \pr_{A_1},h\rangle}\Ainv_{\angbr{\pr_1}{\pr_3}} P_{\angbr{\pr_1,\pr_2}{\eval \angbr{\pr_2}{\pr_3}}}(\alpha)
\end{align}
so that we are left to prove \eqref{ecco6} and \eqref{ecco7} to be equivalent. Let $\pr'_i$ be the three projections from $A_2\times A_1 \times B^{A_2}$, $\overline{\pr}_i$ the projections from $A_1 \times A_2 \times C$ and $\overline{\pr}'_i$ the projections from $A_2\times A_1 \times C$. Moreover let us assume that \eqref{ecco6} holds for a given arrow $g$. We define the arrow $\freccia{A_1 \times C}{h}{B^{A_2}}$ to be the exponential transpose of $g\langle \overline{\pr}'_2,\overline{\pr}'_1,\overline{\pr}'_3\rangle$, hence it holds that $\eval (1_{A_2}\times h)=g\langle \overline{\pr}'_2,\overline{\pr}'_1,\overline{\pr}'_3\rangle$. We are going to prove that this choice of $h$ is such that:
\begin{align}\label{ecco8}
\Ainv_{\pr_{A_1}\times 1_C} P_{\langle \pr_{A_1\times A_2},g\rangle}=P_{\langle \pr_{A_1},h\rangle}\Ainv_{\angbr{\pr_1}{\pr_3}} P_{\angbr{\pr_1,\pr_2}{\eval \angbr{\pr_2}{\pr_3}}}
\end{align}
allowing to conclude that \eqref{ecco7} holds. Moreover observe that, from a given arrow $h$ satisfying \eqref{ecco7}, one can always recover the corresponding arrow $g$ by anti-transposing $h$ and precomposing by $\langle \overline{\pr}_2,\overline{\pr}_1,\overline{\pr}_3\rangle$ (as $\langle \overline{\pr}'_2,\overline{\pr}'_1,\overline{\pr}'_3\rangle$ is an isomorphism whose inverse is indeed $\langle \overline{\pr}_2,\overline{\pr}_1,\overline{\pr}_3\rangle$). Therefore \eqref{ecco8} would also imply that \eqref{ecco6} follows from \eqref{ecco7}, concluding our adjointness proof. Hence we are left to prove that \eqref{ecco8} holds.

Let $\varphi$ be the arrow $\freccia{A_1\times A_2 \times C}{(1_{A_2}\times h)\langle \overline{\pr}_2,\overline{\pr}_1,\overline{\pr}_3 \rangle}{A_2 \times B^{A_2}}$, and observe that the equality: $$\langle \pr_1,\pr_2,\eval \langle \pr_2,\pr_3\rangle\rangle\langle \overline{\pr}_1,\varphi \rangle =\langle \overline{\pr}_1,\overline{\pr}_2,g\rangle =\langle \pr_{A_1\times A_2},g \rangle$$ holds. Hence it is the case that $P_{\langle \pr_{A_1\times A_2},g \rangle}=P_{\langle \overline{\pr}_1,\varphi \rangle}P_{\langle \pr_1,\pr_2,\eval \langle \pr_2,\pr_3\rangle\rangle}$ and therefore (3) follows if we prove that $\Ainv_{\pr_{A_1}\times 1_C}P_{\langle \overline{\pr}_1,\phi\rangle}=P_{\langle \pr_{A_1},h\rangle}\Ainv_{\angbr{\pr_1}{\pr_3}}$ holds. By the Beck-Chevalley condition for $\Ainv$ it is enough to prove that the right-hand square of the commutative diagram: $$\xymatrix{A_2\times A_1\times C \ar[d]^{\langle \overline{\pr}'_1, \overline{\pr}'_2,h\langle \overline{\pr}'_1,\overline{\pr}'_3\rangle \rangle} \ar[rr]^{\langle \overline{\pr}'_2,\overline{\pr}'_1,\overline{\pr}'_3\rangle} && A_1\times A_2\times C \ar[rrr]^{\pr_{A_1}\times 1_C=\langle \overline{\pr}_1,\overline{\pr}_3 \rangle} \ar[d]^{\langle \overline{\pr}_1,\varphi \rangle} &&& A_1 \times C \ar[d]^{\langle \pr_{A_1},h\rangle} \\ A_2\times A_1 \times B^{A_2} \ar[rr]_{\langle \pr'_2, \pr'_1, \pr'_3\rangle} && A_1\times A_2\times B^{A_2} \ar[rrr]_{\langle \pr_1,\pr_3\rangle}   &&& A_1 \times B^{A_2} }$$ is a pullback. This is the case: the outer square is a pullback, as its horizontal arrows are the projections $A_2 \times A_1 \times C \to A_1 \times C$ and $A_2 \times A_1 \times B^{A_2} \to A_1 \times B^{A_2}$ and as $\langle \overline{\pr}'_1, \overline{\pr}'_2,h\langle \overline{\pr}'_1,\overline{\pr}'_3\rangle \rangle=1_{A_2}\times \langle \pr_{A_1},h\rangle$, and moreover the horizontal arrows of the left-hand square are isos, therefore the right-hand square is indeed a pullback as well. 

\medskip

\textit{Part II. Beck-Chevalley condition.} Let us consider a pullback of a projection along a given arrow $f$, which is of the form: 
$$\quadratocomm{D \times C}{D}{A\times C}{A.}{\pr_D}{f \times 1_C}{f}{\pr_A}$$ 
and let us verify that the corresponding equality $\compex{P}_{f}\compex{\Ainv}_{\pr_A}=\compex{\Ainv}_{\pr_D}\compex{P}_{f \times 1_C}$ holds.  Whenever $(A\times C,B,\beta) \in P(A\times C)$ we get (by applying the left and right member of the wannabe equality respectively) the elements: $$(D,B^C,P_{f\times 1_{B^C}}\Ainv_{\langle\pr_1,\pr_3 \rangle}P_{\langle\pr_1,\pr_2,\eval\langle \pr_2,\pr_3\rangle \rangle}(\beta))$$ and $$(D,B^C,\Ainv_{\langle\overline{\pr}_1,\overline{\pr}_3 \rangle}P_{\langle\overline{\pr}_1,\overline{\pr}_2,\eval\langle \overline{\pr}_2,\overline{\pr}_3\rangle \rangle}P_{f\times 1_{C\times B}}(\beta))$$ of $P(D)$, being $\pr_i$ the projections from $A\times C\times B^C$ and $\overline{\pr}_i$ the projections from $D \times C\times B^C$. We are left to prove them to be equal. By Beck-Chevally condition for $\Ainv$ it is the case that $P_{f\times 1_{B^C}}\Ainv_{\langle\pr_1,\pr_3 \rangle}=\Ainv_{\langle \overline{\pr}_1,\overline{\pr}_3\rangle}P_{f\times 1_C \times 1_{B^C}}$, hence we are left to observe that: $$P_{f\times 1_C\times 1_{B^C}}P_{\langle\pr_1,\pr_2,\eval\langle \pr_2,\pr_3\rangle \rangle}=P_{\langle\overline{\pr}_1,\overline{\pr}_2,\eval\langle \overline{\pr}_2,\overline{\pr}_3\rangle \rangle}P_{f\times 1_{C\times B}}$$ which holds because the class of arrows $$\freccia{X \times C\times B^C}{\langle\pr_1,\pr_2,\eval\langle \pr_2,\overline{\pr}_3\rangle \rangle}{X \times C\times B}$$ for $X$ in $\mC$ is a natural transformation $(-)\times C \times B^C\to (-)\times (C\times B)$.
\end{proof}

\section{Proof of Theorem \ref{theorem lifting}}

\label{Appendix C}

We divide the proof of Theorem \ref{theorem lifting} into the following Lemmas. We remind that the following argument shows a proof-irrelevant version of the corresponding results in \cite{Hofstra2010}.

\begin{lemma}\label{lemma the unit commutes with forall}
The unit of the 2-monad $\compex{\mT}$ preserves the universal structure. 
\end{lemma}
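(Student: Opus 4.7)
The plan is to verify the claim by a direct, explicit computation. First, I would recall that for a slat-doctrine $P$, the unit $\eta^{\existential}_P\colon P \to \compex{P}$ at an object $A$ of $\mC$ sends a predicate $\alpha\in P(A)$ to the triple $(A,1,\alpha)\in \compex{P}(A)$, where we silently identify $P(A)$ with $P(A\times 1)$ along the canonical isomorphism. What must be shown is that, whenever $P$ is a universal slat-doctrine on a base with exponents, the square
\[
\xymatrix@+1pc{P(A_1\times A_2)\ar[r]^>>>>>>{\eta^{\existential}_{A_1\times A_2}}\ar[d]_{\Ainv_{\pr_1}} & \compex{P}(A_1\times A_2) \ar[d]^{\compex{\Ainv}_{\pr_1}} \\ P(A_1) \ar[r]_>>>>>>{\eta^{\existential}_{A_1}} & \compex{P}(A_1)}
\]
commutes for every pair of objects $A_1,A_2$ of $\mC$, and similarly for the Beck--Chevalley squares (which actually follow from naturality of $\eta^{\existential}$ once the adjoint part is settled).

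The main step is to compute both composites on an element $\alpha\in P(A_1\times A_2)$ and to recognise them as equal. Using the explicit formula of Theorem~\ref{theorem exist. comp preserv forall} with $B:=1$ (so that $B^{A_2}\cong 1$) the composite along the upper-right path is
\[
\compex{\Ainv}_{\pr_1}(A_1\times A_2,1,\alpha)=\bigl(A_1,1^{A_2},\Ainv_{\langle \pr_1,\pr_3\rangle}P_{\langle \pr_1,\pr_2,\eval\langle \pr_2,\pr_3\rangle\rangle}(\alpha)\bigr).
\]
Since $1^{A_2}\cong 1$, the projection $\langle \pr_1,\pr_3\rangle\colon A_1\times A_2\times 1^{A_2}\to A_1\times 1^{A_2}$ is isomorphic to $\pr_1\colon A_1\times A_2\to A_1$, and the re-indexing $P_{\langle \pr_1,\pr_2,\eval\langle \pr_2,\pr_3\rangle\rangle}$ is, modulo the same iso, the identity on $P(A_1\times A_2)$. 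Hence the triple above is isomorphic to $(A_1,1,\Ainv_{\pr_1}\alpha)$, which is precisely $\eta^{\existential}_{A_1}(\Ainv_{\pr_1}\alpha)$, the composite along the lower-left path.

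The main obstacle I anticipate is not conceptual but bookkeeping: one has to keep careful track of the canonical isomorphisms $1^{A_2}\cong 1$ and $A_1\times 1\cong A_1$ and verify that they commute with the relevant re-indexings, so that the equalities above are literal equalities in $\compex{P}(A_1)$ rather than merely canonical isomorphisms. Once this is checked for the universal quantifier along projections, naturality of $\eta^{\existential}$ (which holds because it is the unit of the 2-monad $\compex{\mT}$ on $\SD$) immediately gives that the Beck--Chevalley squares for $\compex{\Ainv}$ are transported from those of $\Ainv$, so no extra verification is required on that front. This concludes the proof of the lemma.
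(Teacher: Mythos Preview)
Your argument is correct and follows essentially the same approach as the paper: both verify that the square relating $\Ainv$ and $\compex{\Ainv}$ commutes by observing that, in the explicit formula for $\compex{\Ainv}_{\pr_1}$ applied to $(A_1\times A_2,1,\alpha)$, the exponential $1^{A_2}$ is terminal, so the output reduces to $(A_1,1,\Ainv_{\pr_1}\alpha)$. Your version is simply more explicit about the bookkeeping with the canonical isomorphisms and adds the (harmless, if slightly redundant) remark about Beck--Chevalley; the paper's proof compresses all of this into the single observation $1^B\cong 1$.
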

\begin{proof}
Recall from \cite{ECRT} that the unit of the 2-monad $\compex{\mT}$ is provided by the 1-cell $\freccia{P}{\compex{\eta}_P}{\compex{P}}$ of $\SD$, where $\compex{\eta}_P=(\id_{\mC},\iota)$, with $\freccia{P(A)}{\iota_A}{\compex{P}(A)}$ acting as $\alpha\mapsto (A,1, \alpha)$. Then one might check that the diagram: 
$$\xymatrix{
P(A\x B)\ar[r]^{\Ainv_{\pr_A}} \ar[d]_{\iota_{A\x B}}& P(A)\ar[d]^{\iota_A}\\
\compex{P}(A\x B) \ar[r]_{\compex{\Ainv}_{\pr_A}} & \compex{P}(A)
}$$
commutes, since $1^B\cong 1$. Therefore the unit of the 2-monad preserves the universal structure.
\end{proof}
\begin{lemma}
The multiplication of the 2-monad $\compex{\mT}$ along universal pos-doctrines with exponents preserves the universal structure.
\end{lemma}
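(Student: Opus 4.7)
My plan is to verify directly the commutativity of the naturality square
$$\xymatrix{
\compex{\compex{P}}(A_1 \times A_2) \ar[r]^{\compex{\compex{\Ainv}}_{\pr_{A_1}}} \ar[d]_{\mu^{\existential}_{A_1\times A_2}} & \compex{\compex{P}}(A_1) \ar[d]^{\mu^{\existential}_{A_1}}\\
\compex{P}(A_1 \times A_2) \ar[r]_{\compex{\Ainv}_{\pr_{A_1}}} & \compex{P}(A_1)
}$$
on a generic element, in the same spirit as Lemma~\ref{lemma the unit commutes with forall}. First I recall from \cite{ECRT} the explicit shape of the multiplication $\mu^{\existential}_P \colon \compex{\compex{P}} \to \compex{P}$, which collapses two consecutive existentials into one: on an object $(A_1 \times A_2, B, (A_1 \times A_2 \times B, C, \gamma))$ it returns the triple $(A_1 \times A_2, B \times C, \gamma')$, where $\gamma' \in P(A_1 \times A_2 \times (B \times C))$ is the re-indexing of $\gamma$ along the canonical associator isomorphism.

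The two paths of the square then compute as follows. Going down-then-right, we first flatten the element to $(A_1 \times A_2, B \times C, \gamma')$ and subsequently apply $\compex{\Ainv}_{\pr_{A_1}}$ as in Theorem~\ref{theorem exist. comp preserv forall}, which introduces the exponential $(B \times C)^{A_2}$ together with a suitable $\Ainv$-re-indexing of $\gamma'$. Going right-then-down, the outer $\compex{\compex{\Ainv}}$ applies the same construction one level up: it produces $B^{A_2}$ at the outer existential and, inside the resulting $\compex{P}$-predicate, a nested application of $\compex{\Ainv}$ in $\compex{P}$ produces $C^{A_2}$; subsequent application of $\mu^{\existential}$ then collapses the two nested existential witnesses, yielding an object of the form $(A_1, B^{A_2} \times C^{A_2}, \delta)$. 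Hence the comparison reduces to the canonical cartesian closed isomorphism $(B \times C)^{A_2} \cong B^{A_2} \times C^{A_2}$, together with the compatibility of the corresponding evaluation maps with the associators.

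To conclude equality of the two resulting triples in the poset $\compex{P}(A_1)$, I would exhibit the above canonical isomorphism as the witnessing arrow for both directions of the order on triples, and invoke Beck--Chevalley for $\Ainv$ on $P$ in order to show that the two $\Ainv$-re-indexed predicates indeed agree along that isomorphism. The main obstacle will be the bookkeeping needed to align projections, evaluation maps and associators coherently across the two levels of exponentials. However, the essential content reduces to the universal property of $(B \times C)^{A_2}$ together with the naturality and functoriality already established for the existential completion in \cite{ECRT}, so no new categorical input beyond what is already available in the paper is required.
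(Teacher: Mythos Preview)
Your plan is sound and would indeed yield a proof, but it differs substantially from the paper's argument. The paper does not compute the two paths of the square explicitly. Instead, it recalls that $\mu^{\existential}_P=\varepsilon_{\compex{P}}$, where the counit $\varepsilon_Q$ acts as $(A,B,\alpha)\mapsto \Einv^{Q}_{\pr_1}(\alpha)$; hence $\mu^{\existential}_P$ sends $(A,B,\overline{\alpha})$ to $\compex{\Einv}_{\pr_1}(\overline{\alpha})$. The required commutation of $\mu$ with the universal quantifiers therefore amounts to swapping $\compex{\Einv}$ past $\compex{\Ainv}$, and the paper appeals directly to the Skolemization principle of Theorem~\ref{theorem Skolem}, which expresses exactly this interchange in any doctrine arising as the existential completion of a universal one. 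Since both $\compex{P}$ and $\compex{\compex{P}}$ are of this form, the lemma follows in one line. (There is no circularity: Theorem~\ref{theorem Skolem} relies only on the unit lemma, not on the present one.)

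Your route instead unwinds both sides concretely and reduces the comparison to the cartesian-closed isomorphism $(B\times C)^{A_2}\cong B^{A_2}\times C^{A_2}$, together with Beck--Chevalley for $\Ainv$ in $P$. This is correct and entirely self-contained---it does not invoke Theorem~\ref{theorem Skolem} at all---but, as you anticipate, the price is a nontrivial amount of bookkeeping with nested evaluations and projections across two levels of completion. The paper's approach trades that bookkeeping for the conceptual shortcut through the (AC) principle.
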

\begin{proof}
Recall that the multiplication of the 2-monad $\freccia{(\compex{\mT})^2}{\compex{\mu}}{\compex{\mT}}$ is given by the assignment $\compex{\mu}_P=\varepsilon_{\compex{P}}$, where $\varepsilon$ is the counit of the existential completion $\freccia{\compex{P}}{\varepsilon_P}{P}$ and it is given by the pair $(\id, \zeta)$, with $\freccia{\compex{P}(A)}{\zeta_A}{P(A)}$ acting as $(A,B, \alpha)\mapsto \Einv_{\pr_1}(\alpha)$, where $\freccia{A\x B}{\pr_1}{A}$ is a projection. Then we conclude that the multiplication preserves the universal structure since, for every universal pos-doctrine $P$, the map $\freccia{(\compex{\mT})^2(P)}{\compex{\mu}_P}{\compex{\mT}(P)}$ has as domain and codomain two pos-doctrines satisfying the (AC) principle by Theorem \ref{theorem Skolem}.
\end{proof}

\begin{lemma}
Let $\freccia{P}{(F,f)}{R}$ be a 1-cell of universal pos-doctrines with exponents. Then $\freccia{\compex{P}}{\compex{\mT}(F,f)}{\compex{R}}$ preserves the universal structure.
\end{lemma}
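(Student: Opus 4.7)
The plan is to check, by direct computation, that for every pair of objects $A_{1},A_{2}\in\mC$ the square
$$\xymatrix{\compex{P}(A_{1}\times A_{2})\ar[r]^{\compex{\Ainv}^{P}_{\pr_{A_{1}}}}\ar[d]_{\compex{f}_{A_{1}\times A_{2}}} & \compex{P}(A_{1})\ar[d]^{\compex{f}_{A_{1}}}\\ \compex{R}(F(A_{1}\times A_{2}))\ar[r]_{\compex{\Ainv}^{R}_{\pr_{FA_{1}}}} & \compex{R}(FA_{1})}$$
commutes, where on the bottom row we use that $F$ preserves binary products so that $F\pr_{A_{1}}$ is identified with the projection $\pr_{FA_{1}}\colon FA_{1}\times FA_{2}\to FA_{1}$. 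Recall that $\compex{\mT}(F,f)=(F,\compex{f})$ with $\compex{f}_{A}(A,B,\alpha)=(FA,FB,f_{A\times B}(\alpha))$, and that the explicit formula for the right adjoint along a projection, established in Theorem \ref{theorem exist. comp preserv forall}, is
$$\compex{\Ainv}_{\pr_{A_{1}}}(A_{1}\times A_{2},B,\alpha)=(A_{1},B^{A_{2}},\Ainv_{\angbr{\pr_{1}}{\pr_{3}}}P_{\angbr{\pr_{1},\pr_{2}}{\eval\angbr{\pr_{2}}{\pr_{3}}}}(\alpha)).$$

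First I would take an arbitrary $(A_{1}\times A_{2},B,\alpha)\in\compex{P}(A_{1}\times A_{2})$, apply the composite $\compex{f}_{A_{1}}\circ\compex{\Ainv}^{P}_{\pr_{A_{1}}}$ and push $f$ through $\Ainv^{P}$ and the reindexing. This uses two facts: naturality of $f$, giving $f_{A_{1}\times A_{2}\times B^{A_{2}}}\circ P_{\angbr{\pr_{1},\pr_{2}}{\eval\angbr{\pr_{2}}{\pr_{3}}}}=R_{F\angbr{\pr_{1},\pr_{2}}{\eval\angbr{\pr_{2}}{\pr_{3}}}}\circ f_{A_{1}\times A_{2}\times B}$, and the preservation of the universal structure by $(F,f)$, giving $f_{A_{1}\times B^{A_{2}}}\circ\Ainv^{P}_{\angbr{\pr_{1}}{\pr_{3}}}=\Ainv^{R}_{F\angbr{\pr_{1}}{\pr_{3}}}\circ f_{A_{1}\times A_{2}\times B^{A_{2}}}$ (note that $\angbr{\pr_{1}}{\pr_{3}}$ is a projection up to permutation of factors, and the 1-cell condition in $\UD$ gives commutation with $\Ainv$ along all such projections together with Beck-Chevalley).

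Next I would compute the other composite $\compex{\Ainv}^{R}_{\pr_{FA_{1}}}\circ\compex{f}_{A_{1}\times A_{2}}$ on the same element, obtaining
$$(FA_{1},(FB)^{FA_{2}},\Ainv^{R}_{\angbr{\pr_{1}}{\pr_{3}}}R_{\angbr{\pr_{1},\pr_{2}}{\eval\angbr{\pr_{2}}{\pr_{3}}}}f_{A_{1}\times A_{2}\times B}(\alpha)).$$
To match the two expressions one needs that the functor $F$, being a 1-cell in $\exponent{\SD}$, preserves the exponent so $F(B^{A_{2}})\cong(FB)^{FA_{2}}$, and also preserves the evaluation: $F(\eval)$ corresponds to the evaluation map $FA_{2}\times(FB)^{FA_{2}}\to FB$ under this isomorphism. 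Combined with the preservation of products and projections, the image under $F$ of the displayed pairings $\angbr{\pr_{1}}{\pr_{3}}$ and $\angbr{\pr_{1},\pr_{2}}{\eval\angbr{\pr_{2}}{\pr_{3}}}$ coincides with the analogous arrows built in $\mD$, so the two composite formulas agree on the nose.

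The main obstacle is bookkeeping: one has to carefully track the canonical isomorphisms coming from $F$ preserving finite products and exponents in order to recognise that both sides of the square compute the same triple in $\compex{R}(FA_{1})$. Once this identification is made the equality is forced by naturality of $f$ and its commutation with $\Ainv$ along projections, both of which are available by hypothesis.
\qed
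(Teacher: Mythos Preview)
Your proposal is correct and follows essentially the same route as the paper: a direct verification that the square involving $\compex{\Ainv}$ and $\compex{f}$ commutes, using the explicit formula for $\compex{\Ainv}$ from Theorem~\ref{theorem exist. comp preserv forall} together with the naturality of $f$ and its commutation with $\Ainv$ along projections. The paper's own proof is in fact only a one-line sketch (``one might check, by using the naturality of $f$''), so your write-up supplies the details that the paper leaves implicit. One small point of bookkeeping: the paper's formula for $\ovln{f}$ is $(A,B,\alpha)\mapsto (FA,FB,R_{\angbr{\pr_{FA}}{\pr_{FB}}}f_{A\times B}(\alpha))$, which carries the reindexing along the comparison $F(A\times B)\to FA\times FB$ that you suppress; since you already flag that tracking the canonical isomorphisms for products and exponents is the main obstacle, this is consistent with your approach.
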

\begin{proof}
Recall that the 1-cell $\freccia{\compex{P}}{\compex{\mT}(F,f)=(F,\ovln{f})}{\compex{R}}$ is given by $F$ itself and the functor $\freccia{\compex{P}(A)}{\ovln{f}}{\compex{R}F(A)}$ acting on the poset $\compex{P}(A)$ as:
$$(A,B,\alpha)\mapsto (FA,FB, R_{\angbr{\pr_{FA}}{\pr_{FB}}}f_{A\x B}(\alpha)).$$
Since, by hypothesis, $f$ is natural with respect to $\Ainv^P$ and by definition of $\compex{\Ainv}$, one might check, by using the naturality of $f$, that the 1-cell of $(F,\ovln{f})$ preserves the universal structure.
\end{proof}

\end{document}